\newtheorem{theorem}{Theorem}[section]
\newtheorem{lemma}[theorem]{Lemma}
\newtheorem{corollary}[theorem]{Corollary}
\newtheorem{proposition}[theorem]{Proposition}
\theoremstyle{definition}
\newtheorem{definition}[theorem]{Definition}%[section]
\newtheorem{remark}{Remark}[section]
\def\C{\mathbb{C}}
\def\R{\mathbb{R}}
\def\Z{\mathbb{Z}}
\def\cC{\mathcal{C}}
\def\cF{\mathcal{F}}
\def\cH{\mathcal{H}}
\def\cI{\mathcal{I}}
\def\cO{\mathcal{O}}
\def\cP{\mathcal{P}}
\def\cF{\mathcal{F}}
\def\cT{\mathcal{T}}
\def\cW{\mathcal{W}}
\def\fA{\mathfrak{A}}
\def\fC{\mathfrak{C}}
\def\fg{\mathfrak{g}}
\def\fG{\mathfrak{G}}
\def\fK{\mathfrak{K}}
\theoremstyle{theorem}
\newtheorem*{theorem-non A}{Main Theorem A}
\newtheorem*{theorem-non}{Main Theorem B}
\def\e{\varepsilon}
\title[Pseudo-elliptic geometry $\&$ Maurer--Cartan structures]{Pseudo-elliptic geometry of a class of Frobenius-manifolds \\ $\&$\\  Maurer--Cartan structures}
\author{N. Combe \and Ph. Combe\and H. Nencka}
\address{ Max Planck Institut for Matthematics in Sciences, Inselstra\ss e 22, 04103 Leipzig, Germany}
\email{noemie.combe@mis.mpg.de}
\date{\today}
\keywords{Non-Euclidean geometry, Paracomplex geometry, Lorentzian manifold, Jordan algebras, Frobenius manifold}
\subjclass[2020]{Primary: 17Cxx, 53B05, 53B10, 53B12; Secondary:16W10, 53D45}
\thanks{This research was supported by the Max Planck Society's Minerva grant. The authors express their gratitude towards MPI MiS for excellent working conditions.}
\begin{document}

\begin{abstract}
The recently discovered fourth class of Frobenius manifolds by Combe--Manin in \cite{CoMa} opened and highlighted new geometric domains to explore. 
The guiding mantra of this article is to show the existence of hidden geometric aspects of the fourth Frobenius manifold, which turns out to be related to so-called {\it causality conditions.} 
Firstly, it is proved that the fourth class of Frobenius manifolds is a Pseudo-Elliptic one.
Secondly, this manifold turns out to be a sub-manifold of a non-orientable Lorentzian projective manifold.
Thirdly, Maurer--Cartan structures for this manifold and hidden geometrical properties for this manifold are unraveled.
 {\it In fine}, these investigations lead to the rather philosophical concept of  causality condition, creating a  bridge between the notion of causality coming from Lorentzian manifolds (originated in special relativity theory) and the one arising in probability and statistics.

%The notion of Frobenius manifolds (resp. F-manifolds) arose as an interaction between topology and quantum physics. In this paper, geometries of the recently discovered fourth class of Frobenius manifolds are investigated. First of all, {\it pseudo-Ellipticity} of the manifold is proven. Secondly, this manifold turns out to be a submanifold of a non-orientable Lorentzian projective manifold. Thirdly, using Cartan's theory of $m$-pairs, and Norden--Shirokov's normalization theory approach, we find {\it Maurer--Cartan structures} for this manifold and unravel hidden geometrical properties. Finally, this work leads to the rather philosophical concept of {\it causality condition}. A bridge between the notion of causality coming from Lorentzian manifolds and the notion of causality arising in probability and statistics is established. 
\end{abstract}

\maketitle

%\vspace{5pt}
%\setcounter{tocdepth}{1}
\tableofcontents

\section*{Introduction}
The notion of Frobenius manifolds (resp. $F$-manifolds) is the fruit of fifty years of remarkable interaction between topology and quantum physics. This relation involves the most advanced and sophisticated ideas on each side, and lead to Topological Quantum Field Theory.

Three classes of Frobenius manifolds include quantum cohomology (topological sigma-models), unfolding spaces of singularities (Saito’s theory, Landau-Ginzburg models), and Barannikov--Kontsevich construction starting with the Dolbeault complex of a Calabi--Yau manifold and conjecturally producing the B-side of the mirror conjecture in arbitrary dimension \cite{Ma99}.

The recently discovered fourth class of Frobenius manifolds by Combe--Manin in \cite{CoMa} opened and highlighted new geometric domains to explore. The aim of this article is to show the existence of {\it hidden geometric aspects} of the fourth Frobenius manifold. 

This class of Frobenius manifolds includes manifolds of probability distributions, given by a triple $(M,g,\nabla)$ where $M$ is a manifold, equipped with a Riemannian metric $g$ and a torsion free connection $\nabla$. 

As it was shown in \cite{CoCoNen1,CoMa}, the $F$-manifolds in the sense of Combe--Manin \cite{CoMa} have an unexpected meaning from the point of view of algebraic geometry. To give an example, the tangent space to this manifold can be interpreted as a module over a certain unital associative, commutative rank 2 algebra. This algebra being a non division algebra, implies extremely rich non Euclidean geometrical properties and leads to unexpected developments, such as in \cite{CoMaMa1,CoMaMa2}. 

\smallskip 

These geometric investigations lead unexpectedly to the rather philosophical notion of {\it causality}. It turns out that, using our construction, a bridge between the notion of causality coming from Lorentzian manifolds given by S. W. Hawking and G. F. R Ellis in \cite{HaEl} and the notion of causality, arising in probability and statistics \cite{Sha96} can be established,. 

\smallskip 

 According to B. Dubrovin (\cite{D96} and \cite{Ma99}), the main component of a Frobenius structure
 on $M$ is a {\it (super)commutative, associative and bilinear over constants
 multiplication $\circ : \cT_M \otimes \cT_M \to \cT_M$  on its tangent sheaf $\cT_M$}.
 
 Additional parts of the structure in terms of which  further restrictions upon $\circ$
 might be  given, are listed below:
 \begin{list}{--}{}
\item A subsheaf of flat vector fields $\cT^f_M \subset \cT_M$ consisting of
 tangent vectors flat in a certain affine structure.
 
\item A metric (nondegenerate symmetric quadratic form) $g: S^2(\cT_M)\to \cO_M$.
 
\item  An identity $e$. 
 
\item  An Euler vector field $E$ .
\end{list}

We start, as above, with a family of data 
\begin{equation}\label{E:FM}
(M; \quad \circ : \cT_M\otimes \cT_M \to \cT_M; \quad \cT_M^f \subset \cT_M; \quad
g : S^2(\cT_M) \to \cO_M),  
\end{equation}
mostly omitting  identity $e$ and Euler field $E$.
\vspace{3pt}

The main additional structure bridging these data together is a family of (local) {\it potentials} $\Phi$
(sections of $\cO_M$)
such that  for any (local) flat tangent fields $X,Y,Z$ we have
\[
g(X\circ Y,Z) = g(X, Y\circ Z) =(XYZ)\Phi .
\]
If such a structure exists, then (super)commutativity and associativity of $\circ$
follows automatically, and we say that  the family~\eqref{E:FM} defines a {\it Frobenius manifold.}

However, relying on the algebraic and geometric developments of \cite{CoCoNen1,CoMa}, it has been shown that we can consider this space of probability distributions as a manifold of 0-pairs i.e. the space of $(n-1)$-hyperplanes lying in a vector space of dimension $n$.  The theory of $m$-pairs was introduced by E. Cartan in \cite{Ca31}, A.P Shirokov\, \cite{Sh87,Sh02} and A.P. Norden\, \cite{No76}. The interpretation of the fourth class of Frobenius manifold as a manifold of 0-pairs has been proved in \cite{CoMa} in Proposition 5.9. 

\smallskip 

Relying on this fundamental result, leads to proving that the fourth class of Frobenius manifolds are projective (non Euclidean) non orientable manifolds which have the structure of a Lorentzian manifold.  

Throughout the paper we will prove these two following statements:
\begin{theorem-non A}
The class of the fourth Frobenius manifolds is a pseudo-Elliptic manifold of type $S^n_1$ equipped with the following {\it Maurer--Cartan structures}:
\[
\left\{\begin{aligned}
d{\bf r} &= \omega {\bf r}  + \omega^sX_s,\\
dX_i &= \omega_i {\bf r} + \omega_i^jX_j.
\end{aligned}\right.\]
\end{theorem-non A}

\begin{theorem-non}
The fourth class of Frobenius manifolds satisfies the following geometric properties: 

\begin{enumerate}
\item it is a projective non-orientable submanifold of a Lorentzian projective manifold;
\item  it is equipped with a non Euclidean geometry; 
\item this manifolds is uniquely determined by an orientable 2-sheeted cover; 
\item  it is decomposed into two  domains, which are symmetric about a real (projective) Hermitian hyperquadric, being the symmetry hyperplane mirror.
\end{enumerate}
\end{theorem-non}
The method of the proof is essentially based on the Cartan's theory of $m$-pairs and Norden--Shirokov's normalization theory, which from an other point of view implies that Maurer--Cartan structure equations are used. The paper is decomposed as follows.

\medskip

\subsubsection{Plan of the paper}\

$\ast$ In section 1, we recall fundaments on manifolds of probability distributions, as well as differential manifolds defined over an associative, commutative algebra of finite dimension. Since it has been shown that the fourth Frobenius manifold is related  to the geometry over the algebra of paracomplex numbers (see \cite{CoMa}), we focus in particular on the rank 2 algebra of paracomplex numbers. 

\smallskip 

$\ast$ In section 2, we develop the geometry of $m$-pairs, applied to the manifold of probability distributions. By $m$-pair, we mean a pair consisting of an $m$-plane and an $(n-m-1)$-plane in a space of dimension $n$. This relies on Cartan's $m$-pairs theory and Norden--Shirokov's normalisation theory. It leads to establishing a bridge between paracomplex geometry, projective geometry and the manifold of probability distributions.

\smallskip 

$\ast$ In section 3, we show that the class of the fourth Frobenius manifold can be considered as pseudo-Elliptic manifolds. The link between section 3 and section 2 relies on a theorem of Rozenfeld\, (see section 4.1.1 in \cite{Roz97}). From this it follows that class of fourth Frobenius manifolds are Lorentzian. Maurer--Cartan structures for those manifolds are presented.
 
 \smallskip 
 
$\ast$ Section 4 considers the automorphism group of the manifold of probability distributions. We complete a theorem by Wolf \cite{Wo}. This theorem classifies a Riemannian symmetric space, the associated Lie group and its totally geodesic submanifolds. The theorem was stated only for the field of real, complex, quaternion, octonion numbers and was left open for algebras. We complete the result for the algebra of paracomplex numbers. In this section we prove that this specific manifold has a pair of totally geodesic submanifolds, which are isomorphic to the cartesian product of real projective spaces. 

 \smallskip 
 
$\ast$ Section 5 is a conclusion and highlights the connection to causality problems. In particular, it raises questions of bridging causality notion  investigated by Hawking, Ellis and the causality notion from probability and statistics. 

\vfill\eject

\subsection*{List of notations}\ 

\vspace{5pt}
\begin{tabular}{l c l}
$A$ & & algebra of finite dimension \\
$A E^n$ & &  affine space over the algebra $A$\\
$A\frak{M}^{m}$ & &  $m$-module defined over algebra $A$\\
$\frak{A}$ & &  spin factor algebra\\
$B_l^{n}(x,y)$ & &  bilinear form of signature $(l,n-l)$\\
$\cC$ & &  cone in $\mathcal{W}$ of (strictly) positive measures \\
$\fC$& &  algebra of paracomplex numbers\\
$\fC E^m$ & &  paracomplex affine space\\
$\fC K$ & & unitary paracomplex space\\
$\fC\cP^n$ & &  projective paracomplex space\\
$E^{m}$ & &  $m$-dimensional real linear space.\\
 $\mathcal{I}$& &  Ideal\\
$\frak{M}^{m}$ & &  $m$-module \\
$R^n_l$ & &  pseudo-Euclidean space of index $l$\\
$S^n_l$ & &  pseudo-Elliptic space of index $l$\\
$\mathcal{W}$ & &  linear space of signed measures \\
 & & with bounded variations, vanishing on an ideal $\mathcal{I}$ \\
 $X_m$ & &  $m$-dimensional hypersurface in affine space\\

\end{tabular}

\medskip

%%%%%%%%%% 
\section{Overview on the paracomplex geometry}

In this section we present several building blocks to construct manifolds over a finite unital commutative associative algebra. 

\subsection{Fifth Vinberg cone}
Consider $(X,\cF)$ the measure space, as defined in the annexe section. Let $\cI$ be an ideal of the $\sigma$--algebra $\cF$, on which measures vanish. Let $\mathcal{W}$ be a linear space of signed measures with bounded variations, vanishing on the ideal $\mathcal{I}$ of the $\sigma$-algebra $\mathcal{F}$. Let $\cC$ be a cone in $\mathcal{W}$ of (strictly) positive measures on the space $(X,\cF)$, vanishing only on an ideal $\cI$ of the $\sigma$--algebra $\cF.$ We have the following result, bridging the cone $\cC$ and the fifth Vinberg cone.   

 \begin{theorem}\label{T:+++}
The positive cone $\cC$, defined above, is a Vinberg cone, defined over the algebra of paracomplex numbers.
\end{theorem}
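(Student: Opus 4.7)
The plan is to verify that $\cC$ satisfies the defining properties of a homogeneous convex cone and then to identify its associated coordinate algebra with the rank-$2$ algebra of paracomplex numbers $\fC$, thereby placing $\cC$ inside Vinberg's classification and matching it with the fifth cone specifically.

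First, I would check the elementary convex-cone properties. Openness of $\cC$ in $\mathcal{W}$ (with its natural topology of bounded variation) and closure under positive dilation follow directly from the definition of $\cC$ as the set of strictly positive measures vanishing only on $\cI$. Convexity is immediate because a convex combination of such measures retains the same support structure and remains strictly positive. Sharpness (that $\cC$ contains no affine line) follows from the fact that no nonzero positive measure admits a positive additive inverse. Together these steps exhibit $\cC$ as a regular, i.e.\ pointed, open, convex cone.

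Second, I would produce a transitive subgroup of $\mathrm{GL}(\mathcal{W})$ acting on $\cC$, which is the extra datum required to promote a regular cone to a homogeneous (Vinberg) cone. The natural candidate is the group of multiplication operators by strictly positive $\cF$-measurable densities: given $\mu,\nu\in\cC$, the Radon--Nikodym derivative $d\nu/d\mu$ supplies an automorphism of $\mathcal{W}$ sending $\mu$ to $\nu$. This establishes homogeneity.

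Third, and this is the heart of the matter, I would identify the local coordinate algebra attached to $\cC$ at a base measure with the paracomplex algebra $\fC=\R[x]/(x^2-1)$. Pointwise multiplication of densities gives the ambient space a commutative, associative, unital product; the two orthogonal idempotents arising from the measurable decomposition dictated by the ideal $\cI$ and its complement produce a presentation isomorphic to $\fC$, precisely the $T$-algebra data that Vinberg's classification assigns to the fifth cone. The main obstacle I anticipate is reconciling this step with the infinite-dimensional measure-theoretic setting of $\mathcal{W}$, since Vinberg's framework of $T$-algebras is inherently finite-dimensional. I would circumvent this by restricting attention to the finitely generated subalgebras emerging from the idempotent decomposition and by appealing to the paracomplex-geometric identifications already established in \cite{CoMa,CoCoNen1}.
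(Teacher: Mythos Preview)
The paper's own ``proof'' of this theorem is a single sentence: \emph{This follows from \cite{CoMa}}. No argument is given here; the result is simply imported. Your first two steps (pointed open convex cone; transitive action by multiplication with positive Radon--Nikodym densities) are correct and in fact anticipate material the paper uses later, in Section~5, when it describes the automorphism group $\fG$ of $\cC$ via $h=\ln\frac{d\nu}{d\mu}$. So on those points you are more explicit than the paper, and consistent with it.

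The genuine gap is in your third step. You propose that the two idempotents $e_\pm$ of $\fC$ arise from ``the measurable decomposition dictated by the ideal $\cI$ and its complement.'' This cannot be right: by definition every measure in $\cW$ (hence in $\cC$) vanishes on $\cI$, so the $\cI$-piece of any such decomposition is identically zero and cannot supply a nontrivial idempotent. The ideal $\cI$ plays only the r\^ole of a null set to be quotiented away; it is not one half of a Peirce splitting. In the framework of \cite{CoMa} and of this paper (Sections~1.3--1.4), the paracomplex structure comes instead from an involutive endomorphism $\fK$ on the tangent bundle satisfying $\fK^2=I$, equivalently from the pair of dual flat affine connections on the statistical manifold, which produces the eigenspace decomposition $T^{1,0}\oplus T^{0,1}$. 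That is the mechanism you would need to invoke; since the paper itself just cites \cite{CoMa}, your ultimate fallback on that reference is exactly what the paper does, but your proposed \emph{internal} justification for the $\fC$-structure misidentifies its origin.
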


This follows from \cite{CoMa}. 

\begin{remark} 
The $n$-dimensional Vinberg cones are in bijection with (semi-simple) Jordan $n$-dimensional algebra. 
Moreover, in the work of Vinberg, there were introduced left symmetric algebras on the convex homogeneous cones. This algebraic structure was given the name Vinberg algebras.

 These algebras are also referred as {\it pre-Lie algebras}, in domains concerned by Hochschild cohomology problems \cite{G} and operads. In Gerstenhaber's works, the Lie bracket involved in the Gerstenhaber structure on the Hochschild cohomology comes from a pre-Lie algebra structure on the cochains. 
\end{remark}

\smallskip 

In this context a Vinberg cone $\cC\subset \cW$ is a non--empty subset, closed with respect to addition and multiplication by positive reals. A convex cone $\cC$  in a vector space $\cW$ with an inner product has a dual cone $\cC^*=\{a\in \cW:\,  \forall b\in \cW,\, \langle a,b\rangle>0 \}$. The cone is self-dual when  $\cC=\cC^*$. It is homogeneous when to any points $a,b\in \cC$ there is a real linear transformation $T:\cC\to \cC$ that restricts to a bijection $\cC\to \cC$ and satisfies $T(a)=b$. Moreover, the closure of $\cC$ should not contain a real linear subspace of positive dimension.

\smallskip 

The automorphism group $\fG$ of this cone forms a real, solvable Lie subgroup of $GL_{n}(\R)$ and the action is simply transitive. The cone is invariant under this Lie group. One can easily pass to the corresponding Lie algebra. This Lie algebra inherits the property of being real and solvable. It splits into an abelian Lie algebra and a nilpotent one. One can establish a bijection between the Vinberg cone, the Lie group, the corresponding Lie algebra and the Vinberg algebra. 

\smallskip 

\subsection{Two dimensional unital algebras}
As a necessary tool towards the understanding of the cone $\cC$ and its related objects, we start with algebraic definitions. Namely, the definition of the rank 2 algebras, including the spin factor algebra.

Consider the unital and bi-dimensional algebra $\fA$, defined by following relations:

\[e_{i}\cdot e_{j}=\sum_{k}C^{k}_{ij}e_{k}\quad \text{with}\quad C^{k}_{ij}=C^{k}_{ji}.\]

The classification of 2-dimensional unital algebras, generated by generators $\{1, \e \}$, splits into three main classes: 
 
\begin{equation}
\e^2=\left\{\begin{aligned}
1&\quad \text{in the paracomplex case, denoted:}\quad \fC\\
-1&\quad \text{in the complex case, denoted:} \quad \C \\ 
0 & \quad \text{in the dual case, denoted:} \quad C_0 \end{aligned}\right.
\end{equation}

\subsubsection{1. Paracomplex numbers} The algebra of paracomplex numbers given by $\langle 1,\e \, |\, \, \e^2=1\rangle$ can be defined, after a change of basis, by a pair of new generators such that:
 \begin{equation}e_{-}= \frac{1-\e}{2},\quad e_{+}=\frac{1+\e}{2}.\end{equation}
These generators have the following relations:
 \[ e_{-}\circ e_{-}=e_{-},\quad e_{+}\circ e_{+}=e_{+},\quad  e_{-}\circ e_{+}=0,\]
 \[e_{-}+ e_{+}=1,\quad e_{-}- e_{+}= \e.\]
we call this new basis a canonical basis. Notice that this new basis highlights the existence of a {\it pair of idempotents} i.e. $e_{-}^2=e_{-}$ and $e_{+}^2=e_{+}$. This algebra has 0 divisors, being different from 0: it is not a division ring. Paracomplex numbers can be associated to the coordinate ring $\fC=\R[x]/(x^2-1)$.

The structure constants $C^{k}_{ij}$ are:
\begin{equation}C_{11}^{1}=C_{12}^2=C_{22}^1=\, 1, \end{equation}
the other structure constants are null.

This semi-simple algebra is isomorphic to $\mathbb{R}\oplus \mathbb{R}$. It can be identified to $\mathbb{R}^{2}$, as a set, but not as an algebra. 

Note that the algebra of paracomplex numbers is also known as the spin factor algebra (see p.4 in \cite{MC04}).
The conjugation operation is as follows. For a given paracomplex number $z=x+\e y$, we have its conjugated version written as $\overline{z}=x-\e y$.

\smallskip 

\subsubsection{2. Complex numbers} For complex numbers, the algebra is given by $\{1,\e\, |\, \e^2=-1\}$. There is no pair of idempotents, contrarily to the paracomplex case. 
The structure constants $C^{k}_{ij}$ are:
\[C_{11}^{1}=C_{12}^2=\, 1,C_{22}^1=\, -1 \]
the other structure constants are null.

\smallskip 

\subsubsection{3. Dual numbers} For dual numbers, the algebra is given by $\{1,\e\, |\, \e^2=0\}$. The structure constants $C^{k}_{ij}$ are:
\[C_{11}^{1}=C_{12}^2=\, 1, \] 
the other structure constants are null. This is a nilpotent algebra but not a division algebra. 

\begin{remark}
Note that the algebra of paracomplex numbers is not a division algebra. 
This has consequences on the geometry of the affine space defined over this algebra. However, note that neither paracomplex numbers nor complex numbers are nilpotent algebras. 
\end{remark}

%%%%%%%%Module over the spin factor algebra
\subsection{Module over the spin factor algebra}
In this section, we rely on the following trilogy, connecting the following algebraic and geometric objects:

\[\Bigg\{n-\text{Algebra}\quad A \Bigg\}  \leftrightarrow \Bigg\{m-\text{Module over algebra}\Bigg\} \leftrightarrow \Bigg\{mn-\text{Vector space} \Bigg\}.\]

In the same vein, a bigger step allows to relate the algebra  $A$ to a manifold $M^{mn}$ over $A$ (it is discussed in the section \ref{S:CR}). 

Let us denote by $A$ the real $n$-algebra, $\frak{M}^{m}$ the $m$-module and $E^{nm}$ the $nm$-dimensional vector space. 
If $A$ is a real $n$-algebra with basis elements $e_a$, the linear space (or free module) $A\frak{M}^{m}$ admits a {\it real interpretation} in the space $\frak{M}^{mn}$. 

In this interpretation, each vector $x = \{x^i\}$ in $A \frak{M}^{m}$, with coordinates $x^i = x^{ia}e_a$, is interpreted as the vector $\hat{x}=\{x^{ia}\}$ in $\frak{M}^{mn}$. If we replace in this definition the real linear space $\frak{M}^n$ by a linear space or free module $A\frak{M}^n$, we obtain the affine space $A E^n$ over the algebra $A$. In the space $A E^n$ the affine coordinates of points straight lines, planes, $m$-planes, and hyperplanes are defined in the same way as in $E^n$ (see \cite{Roz97}, section 2.1.2).

\smallskip 

Let $\frak{A}$ be the  spin factor algebra. Historically, the spin factor algebra on the space $\R\oplus\R^n$ for $n \geq 2$, 
is equipped with the Jordan product on pairs ${\bf x} \bullet {\bf y} = (x_0y_0+ x\cdot y, x_0y+y_0x)$ where $\cdot$ denotes the usual dot product on $\R^n$ and ${\bf x}=(x_0,x)\in \R\oplus\R^n$. In its matricial representation version, the trace form---which is the usual matrix trace---is in this case $Tr({\bf x})=x_0$.

Construct the $m$-module over the spin factor algebra $\frak{A}\frak{M}^{m}$.
The affine representation of the algebra $\frak{A}$, or free module  $\frak{A}\frak{M}^{m}$, admits a realisation in the real linear space $E^{2m}$. We develop this point of view below. 

Let $E^{2m}$ be a $2m$-dimensional real linear space. {\it A paracomplex structure} on $E^{2m}$ is an endomorphism $\fK: E^{2m} \to E^{2m}$ such that $\fK^2=I$. The eigenspaces $E^{m}_+, E^{m}_-$ of $\fK$ with eigenvalues $1,-1$ respectively, have the same dimension.  The pair $(E^{2m},\fK)$ will be called a {\it paracomplex vector space.} We define {\it the paracomplexification of $E^{2m}$} as $E^{2m}_\fC = E^{2m} \otimes_{\R} \fC$ and we extend $\fK$ to a $\fC$-linear endomorphism $\fK$ of $E^{2m}_\fC$.

\begin{lemma}[\cite{Roz97}]
Let $E^{2m}_\fC = E^{2m} \otimes_{\R} \fC$ be endowed with an involutive $\fC$-linear endomorphism $\fK$ of $E^{2m}_\fC$. Then,  the space $E^{2m}_\fC$  is decomposed into the direct sum of a pair of $m$-dimensional subspaces $E^{m}_{+}$ and $E^{m}_{-}$ such that: \[E^{2m}_\fC=E^{m}_{+}\oplus E^{m}_{-},\] verifying:
\[
E^{m}_{+} = \{v\in E^{2m}_\fC \, |\, \fK v=\e v\}=\{v+\e\fK v\, |\, v \in E^{2m}_\fC\},
\]
\[
E^{m}_{-} = \{v\in E^{2m}_\fC |\,  \fK v= -\e v\}=\{v-\e\fK v\, |\, v\in E^{2m}_\fC\}.
\]
\end{lemma}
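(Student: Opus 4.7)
The plan is to exploit the canonical idempotents $e_{\pm}=(1\pm\e)/2$ of $\fC$ by constructing the associated spectral projectors of $\fK$. I would first set
\[
P_{\pm} := \tfrac{1}{2}\bigl(I \pm \e\,\fK\bigr)\, :\, E^{2m}_\fC \longrightarrow E^{2m}_\fC.
\]
Using the two defining relations $\fK^{2}=I$ and $\e^{2}=1$, a direct computation yields
\[
P_{\pm}^{2}=P_{\pm}, \qquad P_{+}P_{-}=P_{-}P_{+}=0, \qquad P_{+}+P_{-}=I,
\]
which already produces an internal direct sum $E^{2m}_\fC = \operatorname{Im}(P_{+})\oplus\operatorname{Im}(P_{-})$.

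The second step is to identify these images with the two presentations of $E^{m}_{\pm}$ given in the statement. Applying $\fK$ to $v\pm\e\,\fK v$ gives $\fK v\pm\e v=\pm\e(v\pm\e\,\fK v)$, so that $\operatorname{Im}(P_{\pm})\subseteq E^{m}_{\pm}$. For the reverse inclusion, if $v$ satisfies $\fK v=\pm\e v$ then $P_{\pm}v=v$, which exhibits $v$ both as an element of $\operatorname{Im}(P_{\pm})$ and, up to the factor $\tfrac{1}{2}$, as an element of the form $v\pm\e\,\fK v$. This establishes simultaneously the eigenspace and the image-of-a-map characterizations of $E^{m}_{\pm}$.

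For the dimension count, I would use that the restriction of $\fK$ to the underlying real space $E^{2m}$ is an involution whose $\pm 1$-eigenspaces $V_{+},V_{-}$ have, by the paracomplex hypothesis, equal real dimension $m$; extension of scalars gives the free $\fC$-module splitting $E^{2m}_\fC=(V_{+}\otimes_{\R}\fC)\oplus(V_{-}\otimes_{\R}\fC)$ of $\fC$-rank $2m$. A short computation, expressing $P_{\pm}$ in this splitting through the idempotents $e_{\pm}$, exhibits $\operatorname{Im}(P_{\pm})$ as a free $\fC$-module of rank $m$. The main subtlety I anticipate is precisely the zero-divisor structure of $\fC$: because $e_{+}e_{-}=0$, the triviality of the intersection $E^{m}_{+}\cap E^{m}_{-}$ is not purely formal. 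It is dealt with by the following sanity check: if $\fK v=\e v=-\e v$, then $2\e v=0$, and writing $v=a+\e b$ with $a,b\in E^{2m}$ gives $\e v=b+\e a$, forcing $a=b=0$ and therefore $v=0$. Once this bookkeeping is discharged, everything else is direct verification.
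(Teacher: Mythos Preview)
The paper does not supply its own proof of this lemma: it is stated with a citation to Rozenfeld and followed immediately by a remark, so there is nothing in the text to compare against beyond the surrounding context (namely that $\fK$ is the $\fC$-linear extension of a real paracomplex structure on $E^{2m}$, which you correctly invoke for the dimension count).

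Your argument is correct and is the standard spectral-projector approach. One minor observation: the ``main subtlety'' you flag about zero-divisors is a bit overstated in this particular step. The element $\e$ itself is a unit in $\fC$ (indeed $\e^{-1}=\e$), so from $2\e v=0$ you may conclude $v=0$ immediately by multiplying by $\e$; the zero-divisor issue only genuinely intervenes when you later analyse the $\fC$-module structure of $\operatorname{Im}(P_{\pm})$ through the idempotents $e_{\pm}$, and there your bookkeeping via the real splitting $V_{+}\oplus V_{-}$ handles it cleanly.
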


\begin{remark}
This splitting also appears in the context of a space over complex numbers. 
\end{remark}

\subsection{Paracomplex manifold}\label{S:CR}
We establish the final building block relating the unital real algebra $A$, the $A$-module and the manifold defined over $A$.  
Let $y=f(x)$ be a (analytic) function, whose domain and range belong to a commutative algebra (i.e. $C_{jk}^h=C_{kj}^h$). We put $x=\sum_ix_ie_i,$ $y=\sum_iy_ie_i.$
From the generalized Cauchy--Riemann we have the following:
\begin{equation}\sum_h\frac{\partial y_i}{\partial x_h}C_{jk}^h=\sum_h\frac{\partial y_h}{\partial x_i}C_{hk}^j,\end{equation}
where $C_{jk}^h$ are the constant structures (see \cite{Sha66}). 

\vskip.2cm
We restrict our attention to the case of paracomplex manifolds.
A {\it paracomplex manifold} is a real manifold $M$ endowed with a paracomplex structure $\fK$ that admits an atlas of paraholomorphic coordinates (which are functions with values in the algebra $\fC = \R + \e\R$ defined above), such that the transition functions are paraholomorphic.

Explicitly, this means the existence of local coordinates $(z_+^\alpha, z_-^\alpha),\, \alpha = 1\dots, m$  such that
paracomplex decomposition of the local tangent fields is of the form
\begin{equation}
T^{+}M=span \left\{ \frac{\partial}{\partial z_{+}^{\alpha}},\, \alpha =1,...,m\right\} ,
\end{equation}
\begin{equation}
T^{-}M=span \left\{\frac{\partial}{\partial z_{-}^{\alpha}}\, ,\, \alpha =1,...,m\right\} .
\end{equation}
Such coordinates are called {\it adapted coordinates} for the paracomplex structure $\fK$.

By abuse of notation, we write $\partial_z$ instead of $\frac{\partial}{\partial z^{\alpha}}$.

%%%%%%%
  We associate with any adapted coordinate system $(z_{+}^{\alpha}, z_{-}^{\alpha})$ a paraholomorphic coordinate system $z^{\alpha}$ by 
\begin{equation}
z^\alpha\, =\, \frac{z_{+}^{\alpha}+z_{-}^{\alpha}}{2} +\e\frac{z_{+}^{\alpha}-z_{-}^{\alpha}}{2}, \alpha=1,...,m .
\end{equation}

We define the paracomplex tangent bundle as the $\R$-tensor product $T^\fC M = TM \otimes \fC$ and we extend the endomorphism $\fK$ to a $\fC$-linear endomorphism of $T^\fC M$. For any $p \in M$, we have the following decomposition of $T_{p}^\fC M$:
\begin{equation}
T_p^\fC M=T_p^{1,0}M \oplus T_p^{0,1}M\,
\end{equation}
where 
\begin{equation}
T_p^{1,0}M = \{v\in T_p^\fC M | \fK v=\e v\}=\{v+\e \fK v| v \in E^{2m}\} ,
\end{equation}
\begin{equation}
T_p^{0,1}M = \{v\in T_p^\fC M | \fK v= -\e v\}=\{v-\e \fK v|v\in E^{2m}\}
\end{equation}
are the eigenspaces of $\mathfrak{K}$ with eigenvalues $\pm \e$.
The following paracomplex vectors 
\begin{equation}
\frac{\partial}{\partial z_{+}^{\alpha}}=\frac{1}{2}\left(\frac{\partial}{\partial x^{\alpha}} + \e\frac{\partial}{\partial y^{\alpha}}\right),\quad \frac{\partial}{\partial{z}_{-}^{\alpha}}=\frac{1}{2}\left(\frac{\partial}{\partial x^{\alpha}} - \e\frac{\partial}{\partial y^{\alpha}}\right)
\end{equation}
form a basis of the spaces $T_p^{1,0}M$ and $T_p^{0,1}M$. 

This paragraph presents the final building block of the construction of the manifold over paracomplex numbers. Hence, now it is possible to present the  proof of the Main Theorem A.
%%%%%%%%%m-pairs

\section{First part of the proof of Theorem A}
In view of proving the Main Theorem A, we give a first part of the proof, by introducing the following construction of the  paracomplex projective spaces and related notions.
\subsection{Construction of a paracomplex projective space}
Let us introduce a notion of projective spaces over paracomplex numbers $\fC\cP^n$. The notation  $\fC E^n$ stands for paracomplex affine space. 
 Let $\R\cP^n$ be a real $n$- dimensional projective space. Any point of the  $\R\cP^n$ space can be determined by a system of homogeneous coordinates $[X^0: X^1: ...: X^n]\in \R\cP^{n}$.

\smallskip 

Points of $\fC\cP^n$ the projective paracomplex space are given by the homogeneous coordinates $[X^0: X^1:...:X^n]\in \fC\cP^n$, where $X^i$ are paracomplex numbers. In such a space $\fC\cP^n$ it is possible to define straight lines, planes and hyperplanes as in the real projective space $\R\cP^n$. 
In that space, any point can be given by $n$ real numbers $x^1, x^2,..., x^n$ which are paracomplex numbers. 

Exactly as in the case of real projective spaces,  one defines the relation between the real affine space and the real projective space $\R\cP^n$ by considering that to any point $x^i$ one can attach  homogeneous coordinate $X^i/X^0$, one can proceed similarly for the affine and projective paracomplex space. To any point of affine paracomplex space $\fC E^n$, given by coordinates $x^i$ (which are paracomplex numbers), there corresponds a homogeneous coordinate $X^i/X^0$ in the paracomplex projective space $\fC\cP^n$. As it is known, in the classical (real or complex) framework, the points at infinity in projective space are given by hyperplanes at infinity i.e. we have an equation of the type $X^0=0$. However, in the case of paracomplex projective spaces, there exist not only infinity hyperplanes but as well {\it special points}: the points corresponding to the {\it zero divisor}.

From another point of view, the paracomplex projective space $\fC\cP^n$ and the real projective space $\R\cP^n$ are directly related. The points of 
$\fC\cP^n$ can be realized as pairs of points lying in the real projective space  $\R\cP^n\times \R\cP^n$. Indeed, a given point $X^i$ of $\fC\cP^n$ is given by a pair of points in  $\R\cP^n\times \R\cP^n$, where 
\[X^i=x^i e_+ + y^ie_-,\] and these real projective points have for coordinates $x^i$ and $y^i.$

Let us consider a transformation of the coordinates $x^i$ and $y^i$ into the coordinates $kx^i$ and $ly^i.$ Then, the coordinates $x^i$ are replaced by
the coordinates $(ke_+ + le_{-})x^i.$ Under this transformation, a straight line  is transformed into a pair of straight lines and the (hyper)planes are transformed into pairs of (hyper)planes, each of which lie respectively in a copy of $\R\cP^n$.

From these transformations, it follows that in the paracomplex projective space $\fC\cP^n$ one has the following property:
\smallskip 

 {\it Through two points can pass an infinite number of straight lines. Moreover, two straight lines of $\fC\cP^n$ in the same plane will intersect themselves in more than one point.}

\subsubsection{Projective Group of transformations}
In the $\fC\cP^n$ space one can define a group of collineations, correlations, resp. anti-collineation and anti-correlations.
The group of collineations is determined by the following system of equations:

\begin{equation}\label{E:sys}
k{X’}^i= \sum_j a^i_j X^j, \quad i,j= 1,2,\cdots,n.
\end{equation}
where $a^i_j$ are matrix entries, being paracomplex numbers, $k$ is an arbitrary paracomplex number. 
The anti-collineation is obtained by taking the conjugation of all the $X'^j$ and $X^i$ in  (\ref{E:sys}).

\medskip 
\subsection{Some notions on $m$-pairs}
We have previously recalled rudiments of paracomplex geometry and showed the relation to the fourth Frobenius manifold. 
In this section, we introduce the language of $m$-pairs. This allows us to show the connection to projective geometry and later on to the pseudo-Elliptic spaces.  
\smallskip
We define  $A\cP^{n}$ to be the $n$-dimensional projective space defined over an algebra $A$ (associative, commutative, unital of finite dimension). By abuse of notation and whenever the context is clear we will use simply the notation $\cP^{n}$. Let $X_{d}$ be a $d$-dimensional surface of the $n$-dimensional projective space $A\cP^{n}$, with $d\leq n$. 

\begin{definition}[Normalized surface]
The surface $X_{d}$ is said to be normalized in the Norden sense, if at each point $p\in X_{d}$, are associated the two following hyperplanes:
\begin{enumerate}
\item Normal of first type, $P_{I}$, of dimension $n-d$, and intersecting the tangent $d$-plane $T_{p}X_{d}$ at a unique point $p \in X_{d}$. 
\item Normal of second type, $P_{II}$, of dimension $d-1$, and included in the $d$-plane $T_{p}X_d$, not meeting the point $p$.
\end{enumerate}
\end{definition}

This decomposition expresses the duality of projective space. In particular, in the limit case, where $d=n$, then $P_{I}$ is reduced to the point $p$ and $P_{II}$ is the $(n-1)$-surface which does not contain the point $p$. 
This property is nothing but the usual duality of projective space. Note that in this case, $X_{n}$ can be identified with the projective space $\cP^{n}$.

\begin{definition}\label{D:mpairs}
A pair consisting of an $m$-plane and an $(n-m-1)$-plane is called an $m$-pair. 
\end{definition}

\smallskip 

We establish a relation to Grassmannians. A Grassmannian $G(k,n)$ is a space that parametrizes all $k$-dimensional linear subspaces of the $n$-dimensional vector space. 
In particular, the Grassmannian $G(1,n)$ is the space of lines through the origin in the vector space and is the same as the projective space $\cP^{n-1}$.

\smallskip 

\begin{remark}
Each $m$-pair corresponds to a point of a Grassmannian of type $G(m,n)$. Reciprocally, every point in the  Grassmannian manifold $G(m,n)$ defines an $m$-plane in $n$-space. Fibering these planes over the Grassmannian one arrives at the vector bundle, which generalizes the tautological bundle of a projective space. Similarly the $(n-m)$-dimensional orthogonal complements of these planes yield an orthogonal vector bundle. 
\end{remark}

From~\cite{No58,Sh87}, for normalized surfaces associated to an $m$-pair space, the following properties holds:
\medskip
\begin{lemma}\label{L:pairs}
\ 

\begin{enumerate}
\item The space of $m$-pairs is a projective, differentiable manifold.
\item For any integer $m\geq 0$, a manifold of $m$-pairs contains 2 flat, affine and symmetric connections. 
\end{enumerate}
\end{lemma}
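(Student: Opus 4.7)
The plan is to deduce both assertions from the identification of $m$-pairs with Grassmannian-type objects (the remark preceding the lemma) combined with the Norden--Shirokov normalization theory already in play in this section.

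For the first assertion, I would observe that, by the remark preceding the lemma, the data of an $m$-pair in $A\cP^n$ is exactly a point of the product $G(m,n)\times G(n-m-1,n)$ of two Grassmannians of the ambient projective space. Each Grassmannian over the commutative associative algebra $A$ inherits a canonical projective differentiable structure from its Pl\"ucker-type embedding into a paracomplex projective space, and the Cartesian product of two such manifolds is again a projective differentiable manifold. The (open, dense) condition that the two factors be in general position does not alter this structure. Hence the space of $m$-pairs acquires the required projective differentiable structure.

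For the second assertion, the two flat affine symmetric connections are produced directly from the two components of a Norden normalization. Given an $m$-pair realised, at each point of the underlying surface, as the pair $(P_I,P_{II})$ of normals of the first and second kind, one forms the induced connection by projecting the flat affine connection of the ambient space $AE^n$ (obtained after choosing an affine chart associated to $P_I$) along the transverse complement supplied by $P_I$; interchanging the roles of $P_I$ and $P_{II}$ produces a second induced connection. Each is torsion-free (symmetric) because the paracomplex structure constants entering the generalised Cauchy--Riemann equations of Section~\ref{S:CR} satisfy $C^{k}_{ij}=C^{k}_{ji}$, and each is flat because the ambient projective connection is affine and flat in the chart attached to the chosen normal hyperplane. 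The two connections correspond to the two eigenbundles of the paracomplex structure $\fK$ associated to the idempotents $e_+,e_-$ introduced in Section~1.

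The main obstacle I anticipate is showing that the flatness of each induced connection persists globally on the manifold of $m$-pairs, and not merely on a single affine chart. This is where one must exploit the fact that both normals $P_I$ and $P_{II}$ are chosen compatibly across the manifold: the transition maps between the affine charts defined by the Norden normalization remain affine, so vanishing Christoffel symbols in one chart remain vanishing in the next. I would invoke~\cite{No58,Sh87} for the detailed verification, since this is precisely the content of their normalization formalism, and then conclude by matching the resulting pair of connections with the paracomplex decomposition $T^{+}M\oplus T^{-}M$ established in Section~\ref{S:CR}.
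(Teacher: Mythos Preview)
The paper does not actually prove this lemma: it is stated immediately after the sentence ``From~\cite{No58,Sh87}, for normalized surfaces associated to an $m$-pair space, the following properties holds:'' and is simply attributed to Norden and Shirokov without further argument. Your proposal is therefore not competing with a proof in the paper but attempting to unpack the cited references.

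Your outline for part~(1) via the product of Grassmannians and the Pl\"ucker/Segre embeddings is sound and is indeed the standard way to exhibit the projective differentiable structure on the space of $m$-pairs. For part~(2) you are right that the two connections come from the Norden normalization, one attached to $P_I$ and the other to $P_{II}$, and that the detailed verification of flatness is precisely what is carried out in~\cite{No58,Sh87}. However, two points in your sketch are off. First, your justification of torsion-freeness through the commutativity $C^{k}_{ij}=C^{k}_{ji}$ of the algebra structure constants is not the correct mechanism: in Norden's theory the symmetry of the induced connections is a consequence of how the ambient flat connection is projected along the chosen normals (essentially the symmetry of the second fundamental-type tensor), and holds already over~$\R$ without any reference to the algebra $A$. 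Second, identifying the two connections with the eigenbundles $T^{+}M\oplus T^{-}M$ of the paracomplex structure is premature at this stage: the lemma is formulated for arbitrary $m$-pairs over a general associative commutative algebra $A$, and the link with the idempotents $e_{\pm}$ is specific to the paracomplex case developed later. If you remove those two embellishments and simply defer to~\cite{No58,Sh87} for the flatness, your proposal matches what the paper intends.
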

Turning our attention to 0-pairs, an important key lemma relates 0-pairs, projective spaces and Grassmannians.

\subsection{Paracomplex projective geometry and the fourth Frobenius manifold}
We have the following:
\begin{lemma}[Key lemma]\label{R:1} 
The space of $0$-pairs can be identified with an $(n-1)$-dimensional projective space. This is a Grassmannian space of type $G(1,n)$. 
\end{lemma}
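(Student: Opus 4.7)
The plan is to unwind Definition~\ref{D:mpairs} in the case $m=0$ and match the resulting parameter space with the standard construction of $\cP^{n-1} = G(1,n)$.

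First, I would specialise the definition of $m$-pair to $m=0$: a $0$-pair consists of a $0$-plane (a single point) together with an $(n-1)$-plane (a hyperplane) in the ambient projective space. Passing to the underlying vector-space picture with an $n$-dimensional vector space $V$, this is a pair $(L,H)$ where $L \subset V$ is a $1$-dimensional subspace and $H \subset V$ is an $(n-1)$-dimensional subspace, subject to the natural transversality condition $L \oplus H = V$. At this stage the pair sits inside $G(1,n) \times G(n-1,n)$.

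Second, I would invoke the canonical projective duality that the Norden--Shirokov normalisation provides in this degenerate $m=0$ case of the normalised surface recalled above. The hyperplane $H$ is canonically determined by $L$ (equivalently, $H$ is identified with the annihilator $L^{\perp}$ in $V^*$ under the pairing used in the normalisation framework), so the datum of the $0$-pair collapses to the single line $L$ and the parameter space reduces to $G(1,n)$.

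Third, I would conclude by definition-chasing: the set of $1$-dimensional subspaces of an $n$-dimensional vector space is by definition the Grassmannian $G(1,n)$, and this in turn is the standard construction of the $(n-1)$-dimensional projective space $\cP^{n-1}$, as recalled just before the statement of the lemma. Composing these two identifications yields the claim.

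The main obstacle is the duality step: one must check rigorously that in the degenerate case $m=0$ the hyperplane component of the pair carries no moduli beyond those of the point, so that projection onto the first factor really is a bijection (or at worst a trivial fibration of projective manifolds). Once this identification is established, the rest of the proof is immediate from the definition of $G(1,n)$ and its identification with $\cP^{n-1}$.
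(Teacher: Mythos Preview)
Your outline is essentially correct, but it takes a slightly different route from the paper's proof. You project the $0$-pair onto its \emph{point} component, landing directly in $G(1,n)$, and then argue that the hyperplane $H$ is determined by the line $L$ via the normalisation/annihilator construction. The paper instead projects onto the \emph{hyperplane} component, obtaining the Grassmannian $G(n-1,n)$, and then invokes the (non-canonical) isomorphism $G(n-1,n)\cong G(1,n)$ sending an $(n-1)$-plane to its $1$-dimensional orthogonal complement; only after this duality does it identify the result with $\cP^{n-1}$. Your approach is more direct and avoids the extra Grassmannian-duality step, but it places more weight on the claim that $H$ is canonically recovered from $L$---precisely the obstacle you flag. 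The paper's version trades that obstacle for an explicit orthogonal-complement map, though it is equally silent about why the point component carries no additional moduli once the hyperplane is fixed. In short, both arguments collapse the pair to one factor; they simply choose opposite factors and dualise at different stages.
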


\begin{proof}
For $m=0$, a $0$-pair consists of a point and of an $(n-1)$-hyperplane. So, this amounts to considering the space of $(n-1)$-hyperplanes in an $n$-dimensional space. In other words, this is a Grassmannian of type $G(n-1,n)$. We have a (non-canonical) isomorphism of $G(n-1,n)$  and $G(1,n)$. This isomorphism of Grassmannians sends an $(n-1)$-dimensional subspace into its $1$-dimensional orthogonal complement. Since $G(1,n)$ is the same as the projective space $\cP^{n-1}$, therefore,  we can identify 0-pairs to an $(n-1)$-dimensional projective space. 
\end{proof}

This lemma plays an important role, in particular in relation to the next proposition.  
\begin{proposition}\label{P:0-pairs}
Suppose that $(X,\cF)$ is a finite measurable set where the dimension of $X$ is $n+1$, and measures vanish only on an ideal $I$. Let $S$ be the space of probability distributions on $(X,\cF)$. Then, the space $S$ is a manifold of 0-pairs.
\end{proposition}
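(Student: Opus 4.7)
The plan is to apply the Key Lemma (Lemma~\ref{R:1}) directly, by realising $S$ as an open subset of $\R\cP^n$ through homogeneous coordinates and then constructing at every point the pair of normals demanded by the Norden normalisation. Since $|X|=n+1$, a probability distribution $p=(p_0,\dots,p_n)$ with $p_i>0$ and $\sum_i p_i = 1$ lies on the open real simplex of dimension $n$; passing to homogeneous coordinates $[p_0:p_1:\cdots:p_n]$ embeds $S$ as an open subset of $\R\cP^n$. Thus $S$ plays the role of a surface $X_d$ with $d=n$ inside a projective ambient space of projective dimension $n$, i.e.\ over an ambient linear space of dimension $n+1$---the extremal case in which the tangent $d$-plane $T_pS$ coincides with the entire ambient projective space.

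In this regime the normalisation specialises as follows. The normal of first type $P_I$ has dimension $n-d=0$ and must meet $T_pS$ at $p$; since $T_pS=\R\cP^n$, the only admissible choice is $P_I=\{p\}$ itself. The normal of second type $P_{II}$ must be an $(n-1)$-hyperplane sitting inside $T_pS$ and not passing through $p$. The canonical candidate dictated by the probability structure is the polar hyperplane
\[
H_p := \bigl\{[X^0:\cdots:X^n]\in\R\cP^n \;:\; \textstyle\sum_i p_i X^i=0\bigr\},
\]
defined by the linear functional associated with $p$ itself. Disjointness $p\notin H_p$ reduces to $\sum_i p_i^2>0$, which is automatic on the open simplex. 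Hence $(P_I,P_{II})=(\{p\},H_p)$ is a genuine $0$-pair, and the smooth assignment $p\mapsto(\{p\},H_p)$, combined with the Key Lemma---which here identifies the space of $0$-pairs with $\R\cP^n$ itself via the Grassmannian isomorphism $G(n,n+1)\simeq G(1,n+1)$---exhibits $S$ as a manifold of $0$-pairs.

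The main obstacle I anticipate is not the set-theoretic construction above, but checking that the polar hyperplane $H_p$ is the \emph{natural} normal from the statistical viewpoint rather than an arbitrary one: one must reconcile this choice with the dualistic affine structure on $S$ carried by the Fisher--Rao metric together with the conjugate exponential and mixture connections mentioned in the introduction, so that the normalisation coincides with the tautological one coming from Lemma~\ref{R:1}. Once that compatibility is made explicit, the $0$-pair structure on $S$ is canonical and the proposition follows.
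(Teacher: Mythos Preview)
Your argument is essentially correct, but it is worth noting that the paper does not actually prove this proposition at all: its ``proof'' consists of the single sentence that the statement corresponds to Proposition~5.9 in \cite{CoMa}. So you are not reproducing the paper's proof but rather supplying one where the paper only cites.

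What you do is exactly the natural unpacking of that citation: realise the open simplex as an open chart in $\R\cP^n$ via homogeneous coordinates, observe that this is the extremal case $d=n$ of the Norden normalisation (which the paper itself flags just before Definition~\ref{D:mpairs}), and then produce the canonical $0$-pair $(\{p\},H_p)$ via the polar hyperplane. Together with the Key Lemma this identifies $S$ with an open piece of the manifold of $0$-pairs. The only genuine loose end is the one you already flag: one should verify that the polar hyperplane $H_p$ is the normal of second type singled out by the dual pair of flat connections (exponential/mixture) on $S$, so that the $0$-pair structure is the canonical statistical one rather than an ad hoc choice. This is the content of the Norden--Shirokov normalisation carried out in \cite{CoMa}, and once it is checked your argument is complete. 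Compared with the paper's bare citation, your route has the advantage of being self-contained and of making explicit why the simplex geometry forces the $0$-pair structure; the paper's route has the advantage of delegating the compatibility verification to \cite{CoMa}.
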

\begin{proof}
The statement corresponds to Proposition 5.9 in \cite{CoMa}.\end{proof}

We have introduced the previous part, in order to discuss the statistical manifolds. In particular, manifolds of probability distributions are related to the $m$-pairs in the following way: 

\begin{corollary}\label{C:proj}
The fourth Frobenius manifold is identified with the paracomplex projective space $\fC\cP^{n}$.
\end{corollary}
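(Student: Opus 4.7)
The plan is to chain together three structural results already assembled in the excerpt. First I would recall that the fourth Frobenius manifold, in the sense of Combe--Manin, admits as its canonical representative the space $S$ of probability distributions on a finite measurable set $(X,\cF)$ whose measures vanish only on an ideal $\cI$. Taking $\dim X = n+1$, Proposition~\ref{P:0-pairs} then says that $S$ carries the structure of a manifold of $0$-pairs.

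Second, I would apply the Key Lemma~\ref{R:1}: a manifold of $0$-pairs in an ambient space of dimension $n+1$ is identified with the Grassmannian $G(1,n+1)$, i.e.\ with a projective space of projective dimension $n$. Combining this with Proposition~\ref{P:0-pairs} already identifies $S$ with a projective space of the correct dimension; what remains is only to pin down the algebra of scalars.

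Third, I would upgrade the real projective identification to a paracomplex one by invoking Theorem~\ref{T:+++}. Since the positive cone $\cC$ underlying $S$ is a Vinberg cone defined over the algebra $\fC$ of paracomplex numbers, the tangent module of $S$ is naturally a free $\fC$-module, and hence the homogeneous coordinates parametrizing the $0$-pairs take values in $\fC$ rather than in $\R$. Using the construction of $\fC\cP^{n}$ recalled in Section~2.1---points $[X^{0}:X^{1}:\cdots:X^{n}]$ with $X^{i}\in\fC$, together with the projective equivalence $X\sim kX$ for paracomplex $k$---the space of $(n-1)$-hyperplanes in an $(n+1)$-dimensional paracomplex space is exactly $\fC\cP^{n}$, and the identification follows.

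The main obstacle, to my mind, is reconciling the two projective equivalence relations: one must check that the paracomplex scalar action furnished by Theorem~\ref{T:+++} agrees with the rescaling of homogeneous coordinates that defines $\fC\cP^{n}$. I would handle this via the idempotent decomposition $1 = e_{+}+e_{-}$ recalled in Section~1.2, which realizes $\fC\cP^{n}$ as the product $\R\cP^{n}\times\R\cP^{n}$. This pair of real projective factors should match the two real projective pieces implicit in the $0$-pair decomposition (point plus dual hyperplane) coming from Proposition~\ref{P:0-pairs}, closing the identification and yielding the corollary.
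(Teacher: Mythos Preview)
The paper gives no explicit proof for this corollary; it is stated immediately after Proposition~\ref{P:0-pairs} and is meant to follow at once from that proposition together with the Key Lemma~\ref{R:1}, read in the paracomplex setting already established by Theorem~\ref{T:+++} and Section~1. Your steps 1--3 reproduce exactly this implicit chain, so your approach is the same as the paper's.

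Your step~4 is an elaboration that goes beyond what the paper does at this point: the identification $\fC\cP^{n}\cong\R\cP^{n}\times\R\cP^{n}$ via the idempotent splitting, and its match with the point/hyperplane duality of a $0$-pair, is only made explicit later in the paper (Theorem~\ref{T:pro} and the surrounding discussion). It is not needed to deduce the corollary, but it is a legitimate consistency check and does no harm. The one place to be careful is your phrasing ``the two real projective pieces implicit in the $0$-pair decomposition (point plus dual hyperplane)'': strictly speaking the Key Lemma collapses the $0$-pair space to a single Grassmannian $G(1,n+1)$, and it is the paracomplex scalar structure (not the point/hyperplane dichotomy per se) that produces the two real factors. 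This is a minor issue of interpretation rather than a gap in the argument.
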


\begin{lemma}\label{T:3}
The manifold of probability distributions $S$ has a pair of flat, affine, symmetric connections. 
\end{lemma}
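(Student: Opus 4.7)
The plan is to derive the statement as a direct consequence of two results already established in this section, namely Proposition \ref{P:0-pairs}, which identifies $S$ with a manifold of $0$-pairs, and the second clause of Lemma \ref{L:pairs}, which asserts that every manifold of $m$-pairs carries two flat, affine, symmetric connections. Specializing the latter to $m=0$ applies verbatim to $S$ and delivers the conclusion.

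More explicitly, the steps I would carry out are the following. First, invoke Proposition \ref{P:0-pairs} to view each element of $S$ as a $0$-pair in the sense of Definition \ref{D:mpairs}: a point together with an $(n-1)$-hyperplane in the ambient projective space $A\cP^{n}$, which by Corollary \ref{C:proj} is precisely $\fC\cP^{n}$. Second, read off from the Norden normalization at each point the two complementary objects $P_{I}$ and $P_{II}$, the normals of first and second type, which equip $TS$ with a pair of transversal distributions; these are exactly the data required to define two connections $\nabla^{+}$ and $\nabla^{-}$ on $S$. Third, apply Lemma \ref{L:pairs}(2) with $m=0$ to conclude that $\nabla^{+}$ and $\nabla^{-}$ are flat, affine and torsion-free.

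As a cross-check I would include the complementary viewpoint afforded by paracomplex geometry. Via Corollary \ref{C:proj} and the idempotent decomposition $1 = e_{+}+e_{-}$, one has the splitting $T^{\fC} S = T^{1,0}S \oplus T^{0,1}S$, and the adapted coordinates $(z_{+}^{\alpha}, z_{-}^{\alpha})$ provide two flat affine atlases whose Christoffel symbols vanish. The associated connections are automatically torsion-free and coincide with the pair produced above, now manifestly aligned with the eigenbundles of the paracomplex structure $\fK$.

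The only substantive content of the argument is really packaged inside Lemma \ref{L:pairs}(2), whose proof (due to Norden--Shirokov) verifies globally that the pointwise normalization data assemble into smooth flat connections; this is the main potential obstacle, but since that lemma is already at our disposal in the excerpt, the present proof reduces to the citation above and requires no further computation.
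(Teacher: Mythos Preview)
Your proposal is correct and follows essentially the same route as the paper: the paper's proof simply notes that one can either cite the direct computations in \cite{BuNen1,BuNen2}, or, more geometrically, combine the identification of $S$ with a manifold of $0$-pairs and Lemma~\ref{L:pairs}(2), exactly as you do. Your citation of Proposition~\ref{P:0-pairs} rather than Lemma~\ref{R:1} is in fact the more precise reference for that step, and your paracomplex cross-check is a pleasant addition not present in the paper.
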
 

\begin{proof}
There are different ways of proving this. One possibility is that this follows from the calculation in \cite{BuNen1,BuNen2}.
Another, and more geometric, approach is to apply Lemma\, \ref{R:1} and Lemma\, \ref{L:pairs}.\end{proof}

\begin{remark}
In other words, the fibration is done with the algebra of 2 connections.
\end{remark}

Let us recall the following proposition:
\begin{proposition}\label{P:isome}
The space of $0$-pairs in the projective space is isometric to the hermitian projective space over the algebra of paracomplex numbers.
\end{proposition}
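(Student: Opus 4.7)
The plan is to exhibit an explicit isometry between the manifold of $0$-pairs and $\fC\cP^n$ equipped with its canonical paracomplex Hermitian structure. I would proceed in three stages, starting from the set-theoretic identification and ending with the metric comparison.

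\textbf{Stage 1 (underlying identification).} First I would invoke the Key Lemma~\ref{R:1} together with Corollary~\ref{C:proj}. A $0$-pair is a point $p$ together with an $(n-1)$-plane not through $p$; via the orthogonal-complement duality $G(n-1,n) \simeq G(1,n) = \cP^{n-1}$ each such pair is encoded by a single line. Combining this with Corollary~\ref{C:proj} identifies the underlying smooth manifold with the paracomplex projective space $\fC\cP^n$. Using the idempotent decomposition $X^i = x^i e_+ + y^i e_-$, this identification is compatible with the realization $\fC\cP^n \leftrightarrow \R\cP^n \times \R\cP^n$ described in Section~2.

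\textbf{Stage 2 (Hermitian form on $\fC\cP^n$).} Next I would equip the unitary paracomplex space $\fC K$ with the sesquilinear form
\[
h(X,Y) \;=\; \sum_{i} X^{i}\,\overline{Y^{i}},
\]
where $\overline{x+\e y} = x - \e y$ is the paracomplex conjugation. This form is Hermitian in the paracomplex sense, $h(X,Y) = \overline{h(Y,X)}$, and passes to a well-defined (Fubini--Study-type) metric on $\fC\cP^n$ after quotienting by the $\fC^{\times}$-scaling action. A key structural observation is that paracomplex conjugation interchanges the two idempotent eigenspaces $E^m_+$ and $E^m_-$ of the paracomplex structure $\fK$, i.e.\ interchanges the components $T^{1,0}M$ and $T^{0,1}M$ of the paracomplex tangent bundle decomposition.

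\textbf{Stage 3 (metric comparison).} The main step is to check that the natural metric on the $0$-pair manifold, determined by the Norden normalization data $(P_I, P_{II})$ from the \emph{Normalized surface} definition, coincides with the Hermitian metric on $\fC\cP^n$ constructed above. The decomposition into a normal of first type $P_I$ and a normal of second type $P_{II}$ mirrors exactly the paracomplex splitting $T^\fC M = T^{1,0}M \oplus T^{0,1}M$, and the duality between these normals corresponds to the involution induced by paracomplex conjugation. Using Lemma~\ref{L:pairs}(2), which furnishes a pair of flat, affine, symmetric connections on the $0$-pair manifold, one may write both metrics in adapted paraholomorphic coordinates $(z_+^\alpha, z_-^\alpha)$ and verify their coincidence term by term.

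\textbf{Main obstacle.} The genuine technical point is not the bijection, which follows cleanly from the Grassmannian duality, but the \emph{isometry} in Stage~3: one has to align the normalization metric coming from Cartan--Norden--Shirokov's theory with the Fubini--Study-type metric coming from $h$. The alignment will rely on identifying the two flat connections of Lemma~\ref{L:pairs}(2) with the $\pm\e$-eigencomponents of $\fK$, i.e.\ with the two families of paraholomorphic/para-antiholomorphic coordinates, so that the conjugation-symmetry of $h$ translates into the duality of first- and second-type normals.
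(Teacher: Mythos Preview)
The paper does not prove this proposition at all: its entire proof is the single line ``see e.g.~\cite{Roz97} section 4.4.5.'' So your three-stage plan is not a variant of the paper's argument; it is an attempt to supply what the paper outsources to Rozenfeld.

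That said, there are two genuine issues with your plan as written.

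\textbf{Circularity in Stage~1.} You invoke Corollary~\ref{C:proj} to identify the underlying manifold with $\fC\cP^n$. But Corollary~\ref{C:proj} is a statement about the \emph{fourth Frobenius manifold} (the statistical manifold $S$), not about the space of $0$-pairs in general, whereas Proposition~\ref{P:isome} is a purely geometric statement about $0$-pairs with no reference to $S$. Worse, Corollary~\ref{C:proj} carries no proof in the paper; in context it appears to be a consequence of the chain $\text{Prop.~\ref{P:0-pairs}} \Rightarrow \text{Lemma~\ref{R:1}} \Rightarrow \text{Prop.~\ref{P:isome}}$ together with the paracomplex structure on $\cC$. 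Using it to establish Proposition~\ref{P:isome} risks a logical loop. What you actually need in Stage~1 is the direct observation that a $0$-pair (point, hyperplane not through it) in $\R\cP^n$ is the same datum as a pair of points in $\R\cP^n \times (\R\cP^n)^{\ast}$ off the incidence locus, and that this is precisely the real model of $\fC\cP^n$ via the idempotent splitting $X^i = x^i e_+ + y^i e_-$. Lemma~\ref{R:1} alone does not give you the paracomplex structure; it only gives you one projective factor.

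\textbf{Stage~3 is asserted, not executed.} You correctly identify the crux as showing that the Norden normalization metric on the $0$-pair space agrees with the paracomplex Fubini--Study metric coming from $h$. But your plan stops at ``write both metrics in adapted coordinates and verify term by term'' without indicating what the normalization metric actually is or why the two connections of Lemma~\ref{L:pairs}(2) should align with the $\pm\e$-eigenbundles. This is exactly the content of Rozenfeld's section 4.4.5, and reproducing it requires the explicit cross-ratio formula for the distance on the $0$-pair manifold and a computation showing it matches $\cos^2(\omega/r) = \{x,y\}\{y,x\}/\{x,x\}\{y,y\}$. Until that computation is done, the isometry claim remains open.
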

\begin{proof} 
see e.g.~\cite{Roz97} section 4.4.5.
\end{proof}

Finally, from Proposition \ref{P:isome} and Proposition \ref{P:0-pairs} it follows that: 

\begin{proposition}
The statistical manifold is isometric to the hermitian projective space over the algebra of paracomplex numbers.
\end{proposition}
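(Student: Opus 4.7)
The plan is essentially a two-step composition of results already secured in this section. Since Proposition \ref{P:0-pairs} identifies the statistical manifold with a manifold of $0$-pairs, and Proposition \ref{P:isome} asserts that such a space is isometric to the hermitian paracomplex projective space, transitivity of these identifications will deliver the desired conclusion. So the proof is naturally a one-line concatenation; the substance lies in making sure the two identifications glue properly.

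More explicitly, I would proceed as follows. First, invoke Proposition \ref{P:0-pairs}: under the hypothesis that $(X,\cF)$ is a finite measurable set of cardinality $n+1$ with measures vanishing only on an ideal $\cI$, the space $S$ of probability distributions carries the structure of a manifold of $0$-pairs. Second, apply Key Lemma \ref{R:1} which realizes any such space of $0$-pairs as an $(n-1)$-dimensional projective space, equivalently a Grassmannian of type $G(1,n)$. Third, invoke Proposition \ref{P:isome}, i.e.\ the Rozenfeld identification from \cite{Roz97} Section 4.4.5, which supplies the isometry between this space of $0$-pairs and the hermitian paracomplex projective space $\fC\cP^{n}$. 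The composite map is the required isometry $S \cong \fC\cP^{n}$.

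The main obstacle I anticipate is the compatibility of the metric structures. Proposition \ref{P:0-pairs} produces the identification only as differentiable manifolds, whereas the statement to be proved is an \emph{isometry}. One must therefore verify that the metric on $S$ (inherited from its structure as a statistical manifold, and equivalently, via Theorem \ref{T:+++}, from the positive cone $\cC$ over the algebra of paracomplex numbers) coincides, under the composite identification, with the canonical hermitian paracomplex metric on $\fC\cP^{n}$. This compatibility is strongly suggested by Corollary \ref{C:proj}, which identifies the fourth Frobenius manifold with $\fC\cP^{n}$, and by Lemma \ref{T:3}, which supplies on both sides the expected pair of flat, symmetric, affine connections characteristic of a $0$-pair manifold. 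Thus the remaining work reduces to tracing the definitions and checking that the quadratic forms transported along the composite map agree, a bookkeeping exercise built entirely upon the machinery already assembled in Section 2.
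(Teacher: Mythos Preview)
Your proposal is correct and matches the paper's own argument, which simply states that the result follows from Proposition~\ref{P:isome} combined with Proposition~\ref{P:0-pairs}. Your additional discussion of metric compatibility and the invocation of Lemma~\ref{R:1} and Lemma~\ref{T:3} is more cautious than what the paper does---the paper treats the isometry as an immediate consequence of Rozenfeld's result cited in Proposition~\ref{P:isome} and performs no further bookkeeping---but the core logic is identical.
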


As a last point bridging the statistical manifold and the paracomplex space, we have that:

\begin{lemma}
Suppose that $(X,\cF)$ is a finite measurable set where the dimension of $X$ is $n + 1$, and measures vanish only on an ideal $I$. The space $S$ of probability distributions on $(X,\cF)$ is isomorphic to the hermitian projective space over the cone $M_+(2, \C)$.
\end{lemma}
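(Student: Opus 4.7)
My plan is to close the final link in the chain of identifications built up through Section~2. The immediately preceding proposition has already established that, under the given hypotheses, the statistical manifold $S$ is isometric to the hermitian projective space over the algebra $\fC$ of paracomplex numbers. It therefore suffices to identify, as hermitian projective spaces, the space over $\fC$ with the space over the cone $M_+(2,\C)$, and the bulk of the work reduces to an algebraic identification at the level of Jordan/Vinberg data.

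The first step is to invoke the spin factor description of $\fC$ recalled in Section~1.2: the canonical basis $\{e_+, e_-\}$ of orthogonal idempotents with $e_+ + e_- = 1$ realises $\fC \cong \R \oplus \R$ as the spin factor Jordan algebra. I would then represent $\fC$ as the Jordan subalgebra of diagonal elements of $\mathrm{Herm}(2,\C)$ via $a e_+ + b e_- \mapsto \mathrm{diag}(a,b)$, so that the positive cone $\fC_+$ embeds as the diagonal slice of $M_+(2,\C)$. By the bijection between semi-simple Jordan algebras and Vinberg cones recorded in Theorem~\ref{T:+++} and the subsequent remark, this embedding is compatible with the full Jordan/Vinberg data that enter the definition of a hermitian projective space.

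The second step is to transport the hermitian projective construction through this embedding. A point of $\fC\cP^n$ with paracomplex homogeneous coordinates $[X^0:\dots:X^n]$, $X^i = x^i e_+ + y^i e_-$, is mapped to the equivalence class of the diagonal matrix tuple $[\mathrm{diag}(x^0,y^0):\dots:\mathrm{diag}(x^n,y^n)]$, where equivalence is given by scaling by an invertible diagonal element $\mathrm{diag}(k_+,k_-)$; this is exactly the scaling by $k = k_+ e_+ + k_- e_- \in \fC^{\times}$ that governs the collineations in~\eqref{E:sys}. The hermitian form and the collineation group from Section~2 transfer unchanged, and concatenating with the preceding proposition yields the desired isomorphism.

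The principal obstacle I anticipate is conventional rather than technical: $M_+(2,\C)$ is four-dimensional as a real cone, whereas the cone of positive paracomplex numbers is two-dimensional, so a naive reading of ``hermitian projective space over the cone $M_+(2,\C)$'' would produce a dimension mismatch. The argument relies on interpreting this expression via the diagonal (spin factor) Jordan subalgebra of $\mathrm{Herm}(2,\C)$, coherently with the spin-factor presentation of $\fC$ recalled from \cite{MC04}. Once this convention is pinned down, the lemma is immediate from the preceding proposition together with the explicit algebraic identification above.
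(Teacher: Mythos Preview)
The paper does not give an independent proof of this lemma: it simply cites Theorem~5.10 of \cite{CoMa}. Your proposal is therefore more ambitious than the paper's own treatment, since you attempt to deduce the statement from the preceding proposition via an explicit Jordan--algebra identification.

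That said, the obstacle you flag in your final paragraph is not merely conventional, and your proposed resolution does not close it. The map $a e_+ + b e_- \mapsto \mathrm{diag}(a,b)$ realises $\fC \cong \R \oplus \R$ as a \emph{proper} Jordan subalgebra of $\mathrm{Herm}(2,\C)$, which is the $4$-dimensional spin factor $\R \oplus \R^3$; it is an embedding, not an isomorphism. Transporting the hermitian projective construction along an embedding yields at best an inclusion of projective spaces, not the isomorphism the lemma asserts, and the dimensions visibly differ. Your fix---reading ``over the cone $M_+(2,\C)$'' as ``over the diagonal slice of $M_+(2,\C)$''---quietly replaces the stated target by $\fC_+$ itself, which is circular. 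To make the argument stand on its own you would need the precise definition adopted in \cite{CoMa} for ``hermitian projective space over the cone $M_+(2,\C)$'' and a verification that it matches the paracomplex model; without that input the lemma remains, as in the paper, a black-box citation.
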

\begin{proof} 
See Theorem 5.10 in \cite{CoMa}.
\end{proof}

%%%%%%%%

\section{Second part of the proof of Theorem A}
In this section, Maurer--Cartan structures for the fourth Frobenius manifold are presented. These considerations arise from our next result bridging manifold of probability distributions and so-called {\it pseudo-Euclidean spaces.}

\subsection{Maurer--Cartan structures for the fourth Frobenius manifolds}
 Pseudo-Euclidean spaces, denoted $R^n_l$, arise from the modification of one of the axioms of classical Euclidean spaces $\R^n$. The fifth axiom turns into the following:

{\it ``There are $l$ mutually orthogonal vectors $v_a$ with negative inner squares $v^2_a$ and $n-l$ mutually orthogonal vectors $v_u$  with positive inner squares $v^2_u$, and each vector $v_a$ is orthogonal to each vector  $v_u$.''}
 
The space $R^n_l$ is a pseudometric space, and the integer $l$ is called the index of this space. 

To establish the relation between manifold of probability distributions and pseudo-Euclidean spaces, we use Norden's normalisation theory \cite{No58,No76,Sh87}. More precisely, this evolves around {\it structural equations} of an affine connection space. The Norden method goes as follows.  

Let $X_{m}$ be an $m$-dimensional surface. Equip it with coframes $\{\omega^i\}$ in an affine space $\R E^{n+1}$. 
It is known that there exists an equivalence between an affine connection on $X_{m}$ and an infinitesimal connection in the principal bundle space of linear frames of the manifold $X_{m}$. So, we can establish the structural equations of the affine connection form in the following way:

\begin{equation} d\omega^{i}-\omega^s\wedge \omega_s^i=\Omega^i, \quad d\omega_j^{i}-\omega_j^s\wedge \omega_s^i=\Omega_j^i, \end{equation}
where $\omega_j^i$ are the connection forms and $\Omega^i, \Omega_j^i$ are respectively the torsion and curvature forms of the affine connection.

\smallskip 

Consider a surface $X_m$ and its polar vector equation ${\bf r}={\bf r}(u^1,\dots, u^m)$. Let $M({\bf r})$ be a point on $X_m$ given by the intersection of the line passing through the origin and collinear to the vector  ${\bf r}$. 

Choose a framing of the surface $X_m$ given by $m$ framing vectors $\{{\bf e}_{i}\}_{i=1}^m$ at $M({\bf r})$ and belonging to the $(m + 1)$-dimensional subspace of $\R E^{n+1}$. Consider the normal space to the tangent space, generated by $n-m$ framing vectors, denoted ${\bf e}_\alpha$. The framings are defined as follows $\{{\bf r}, {\bf e}_i, {\bf e}_\alpha\}$ (notice that it includes the polar vector).

These framings verify the following classical system of Maurer--Cartan like equations:
\begin{equation}
\begin{aligned}
d{\bf r} &= \omega {\bf r} + \omega^s{\bf e}_s\\
d{\bf e}_i &= \omega_i {\bf r} + \omega_i^s{\bf e}_s+ \omega_i^\beta {\bf e}_\beta\\
d{\bf e}_\alpha &= \omega_\alpha {\bf r} + \omega_\alpha^s{\bf e}_s+ \omega_\alpha^\beta {\bf e}_\beta
\end{aligned}
\end{equation}

%%%%%
It was proved in \cite{Roz97}, section 4.1.1 and Theorem 4.3 and Theorem 4.1, that a paracomplex projective space is a  pseudo-Euclidean manifold.
%% Dual lemma 

We now consider the geometry of the fourth Frobenius manifold and prove that:
\smallskip

\begin{theorem}\label{T:PE}
The manifold $S$ is a pseudo-Euclidean space $R^n_l$.
\end{theorem}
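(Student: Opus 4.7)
The plan is to combine Corollary~\ref{C:proj}, which identifies $S$ with the paracomplex projective space $\fC\cP^{n}$, with the theorem of Rozenfeld (Section~4.1.1, together with Theorems~4.1 and~4.3 of~\cite{Roz97}) that a paracomplex projective space carries a pseudo-Euclidean structure, and to calibrate the index $l$ directly from the Maurer--Cartan framing set up immediately before the statement. In this way the proof reduces to two steps: transport the geometric question from $S$ to $\fC\cP^n$, and then recognise the pseudo-Euclidean signature on $\fC\cP^n$ through its framing equations.

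First, I would pass from $S$ to $\fC\cP^n$ via Corollary~\ref{C:proj}. Any point $p \in \fC\cP^n$ then lifts to a polar vector ${\bf r}$ in the unitary paracomplex space $\fC K = \fC^{n+1}$, and the standard hermitian form $h({\bf r},{\bf r}) = \sum_{i} r^{i}\overline{r^{i}}$ is well-defined on lines up to a positive paracomplex scalar. Decomposing $r^{i} = x^{i}e_{+} + y^{i}e_{-}$ via the canonical idempotents of Section~1.2 and using $\overline{e_{+}} = e_{-}$, the real part of $h$ becomes a non-degenerate real symmetric form whose signature naturally splits according to the $\pm 1$-eigenspaces of $\fK$ already recorded in the lemma of Section~1.3. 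This produces a real pseudo-metric of some index $l$ on the real tangent bundle of $\fC\cP^n$.

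Second, I would combine this pseudo-metric with the framing $\{{\bf r},{\bf e}_{i},{\bf e}_{\alpha}\}$ and the Maurer--Cartan structural equations
\begin{equation*}
\begin{aligned}
d{\bf r} &= \omega\,{\bf r} + \omega^{s}{\bf e}_{s}, \\
d{\bf e}_{i} &= \omega_{i}\,{\bf r} + \omega_{i}^{s}{\bf e}_{s} + \omega_{i}^{\beta}{\bf e}_{\beta}, \\
d{\bf e}_{\alpha} &= \omega_{\alpha}\,{\bf r} + \omega_{\alpha}^{s}{\bf e}_{s} + \omega_{\alpha}^{\beta}{\bf e}_{\beta},
\end{aligned}
\end{equation*}
already written down. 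Checking that the connection forms $\omega_{j}^{i}$ are compatible with the real symmetric form just constructed, and that the torsion and curvature forms $\Omega^{i}$, $\Omega_{j}^{i}$ vanish in the directions transverse to the paracomplex polarisation, one recognises exactly the structural equations of an affine connection on a pseudo-Euclidean space $R^{n}_{l}$ in the sense of Norden. Applying Rozenfeld's theorem closes this step and yields the pseudo-Euclidean structure.

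The main obstacle I anticipate is not conceptual but bookkeeping: reconciling the value of $l$ produced by the two different descriptions. On the one hand, $l$ arises from Norden's normalisation of the $0$-pair bundle over $S$ (i.e.\ the splitting into normals of the first and second type introduced in Section~2.2), while on the other it comes from the $e_{+}/e_{-}$ idempotent decomposition of the hermitian form on $\fC K$. Both pass through the involution $\fK^{2}=\mathrm{id}$, so the matching should follow from the decomposition $E^{2m}_{\fC} = E^{m}_{+} \oplus E^{m}_{-}$ recorded in Section~1.3, combined with Lemma~\ref{L:pairs}(2) which fixes the two canonical flat connections compatible with the $m$-pair structure. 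Once this signature matching is carried out, the conclusion $S = R^{n}_{l}$ is immediate.
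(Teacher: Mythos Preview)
Your overall strategy---identify $S$ with $\fC\cP^{n}$ via Corollary~\ref{C:proj} and then invoke Rozenfeld's theorem---matches the paper's. However, you miss the key structural simplification in the Maurer--Cartan step. You write down the full three-line system including the normal vectors ${\bf e}_{\alpha}$ and then propose to check that certain forms ``vanish in the directions transverse to the paracomplex polarisation.'' The paper's argument is more direct: since $S$ is a manifold of $0$-pairs (Proposition~\ref{P:0-pairs}), the surface has full dimension in the ambient space and there are \emph{no} normal covectors ${\bf e}_{\alpha}$ at all. The Maurer--Cartan system therefore collapses immediately to the two lines
\[
d{\bf r} = \omega\,{\bf r} + \omega^{s}X_{s}, \qquad dX_{i} = \omega_{i}\,{\bf r} + \omega_{i}^{j}X_{j},
\]
which is precisely the form announced in Main Theorem~A. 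The paper also identifies the framing vectors ${\bf e}_{i}$ concretely with the score vectors $X_{j} = \partial_{j}\ln\rho_{\theta}$ and the polar vector ${\bf r}$ with the affine canonical coordinates, giving the system its statistical content. Your ``vanishing in transverse directions'' is aimed at the wrong mechanism: nothing vanishes, the transverse part of the frame simply is not there.

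Separately, the bookkeeping you propose for reconciling the index $l$ via the idempotent decomposition of the hermitian form is unnecessary for this theorem. The statement of Theorem~\ref{T:PE} asserts only that $S$ is \emph{some} $R^{n}_{l}$; the determination $l=1$ is carried out afterward in a separate Proposition, and by a different route (Chentsov's geodesic convexity combined with Wolf's theorem), not through the $e_{+}/e_{-}$ splitting you describe.
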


\begin{proof}
As was shown in Proposition \ref{P:0-pairs}, the fourth Frobenius manifold are defined by 0-pairs and therefore the covectors ${\bf e}_\alpha$ {\it do not} exist in our case.
We now apply the Norden method \cite{No76}. Let the polar vector {\bf r} be, in the manifold of probability distributions context, defined as the so-called affine canonical coordinates (\cite{Sh87}, p.146). Define, the coframe $\omega$ to be the affine connection component defined in \cite{Sh87}. The vectors ${\bf e}_i$ are nothing  but the score vectors, which have been defined in the Appendix under the notation $X_j=\partial_j\ln \rho_{\theta}$, where $\rho_{\theta}$ is a probability distribution. 
Then, the following system of equations is satisfied:
\begin{equation}\begin{aligned}
d{\bf r} &= \omega {\bf r}  + \omega^sX_s\\
dX_i &= \omega_i {\bf r} + \omega_i^jX_j.
\end{aligned}\end{equation}
In this way we define {\it Maurer--Cartan structures} for the fourth Frobenius manifolds. 

By Rozenfeld's theorem, a paracomplex projective space is a pseudo-Euclidean space. Thus, the manifold of probability distributions is a pseudo-Euclidean space and the statement is proven.
\end{proof}

\subsection{Pseudo-Ellipticity}

Let $R^n_l$, $0\geq l\geq n$, be a real coordinate $n$-dimensional vector space with bilinear form: 
\begin{equation}
B^n_l(x,y)=-\sum_{i=1}^l x_iy_i +\sum_{j=l+1}^nx_jy_j.
\end{equation}

In virtue of Proposition \ref{P:0-pairs}, Lemma \ref{R:1}, and Lemma \ref{L:coco}, a manifold of probability distributions is a manifold of 0-pairs and dual to the section $\cH$ of the fifth Vinberg cone. So, 
this implies, that the bilinear form is given by $B^n_1(x,y)=- x_1y_1 +\sum_{j=2}^nx_jy_j
$ and the the statement below follows.

\begin{proposition}
The fourth Frobenius manifold $S$ is a pseudo-Euclidean space $R^n_1$ of index one.
\end{proposition}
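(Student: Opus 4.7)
The plan is to upgrade Theorem \ref{T:PE}, which already identifies $S$ with a pseudo-Euclidean space $R^n_l$ of \emph{some} index $l$, to the sharper statement $l=1$. The strategy is to pin down exactly one distinguished ``timelike'' direction, coming from the axis of the fifth Vinberg cone, and to show that the remaining $n-1$ directions, which are tangent to the probability simplex, all carry positive inner squares.

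First, I would invoke Proposition \ref{P:0-pairs} together with Lemma \ref{R:1} and Corollary \ref{C:proj} to realise $S$ as a manifold of $0$-pairs, equivalently as the paracomplex projective space $\fC\cP^{n}$. Combined with Lemma \ref{L:coco}, this identifies $S$ with the dual of the section $\cH$ of the fifth Vinberg cone $\cC\subset\cW$. Because $\cC$ is a proper pointed convex cone in $\cW$, it possesses a canonical radial direction, singled out by its interior generator (the ``total mass'' or normalisation direction), which is geometrically distinguished from the orthogonal complement that sits in the tangent hyperplane to $\cH$.

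Next, I would feed this geometric picture into the Maurer--Cartan framing from the proof of Theorem \ref{T:PE}. In that framing, the polar vector ${\bf r}$ plays the role of the cone axis, while the score vectors $X_j=\partial_j\ln\rho_\theta$ span the tangent directions along the simplex. Computing inner squares in this basis, the radial vector ${\bf r}$ produces a negative square (this is the standard sign flip one gets when restricting the ambient quadratic form on $\cW$ to the radial direction of a proper cone), while each score vector $X_j$ is positive since the Fisher--Rao metric on the simplex is positive definite. Consequently, the bilinear form of the ambient pseudo-Euclidean space reads
\[
B(x,y) \;=\; -\,x_1 y_1 \;+\; \sum_{j=2}^{n} x_j y_j,
\]
which is precisely $B^n_1$, so the index is exactly $1$.

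The main obstacle is this last sign check. One must verify rigorously that the dualisation between $0$-pairs and the section $\cH$ of the Vinberg cone preserves the signature assignment, so that exactly the polar direction ${\bf r}$ — and no other — contributes the negative square. Once that is established, the positivity of the remaining $n-1$ directions is forced by the fact that the tangent space to the simplex is Fisher--Rao positive definite, and combining this with Theorem \ref{T:PE} and Rozenfeld's characterisation of paracomplex projective spaces as pseudo-Euclidean manifolds yields the conclusion.
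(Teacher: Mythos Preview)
Your approach is genuinely different from the paper's. The paper does not compute the signature directly at all; instead it invokes two external results: Chentsov's theorem that the manifold of probability distributions is geodesically convex with totally geodesic maximal submanifolds, and Wolf's theorem to the effect that $R^n_l$ is pseudo-convex only when $l=1$. The index is thus pinned down by a global convexity constraint rather than by inspecting the bilinear form in a particular basis.

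Your strategy --- isolating the radial cone direction as the unique timelike vector and invoking positive-definiteness of the Fisher--Rao metric on the simplex for the remaining $n-1$ directions --- is more intrinsic and arguably more illuminating. However, the step you yourself flag as the ``main obstacle'' is a real gap, not a routine check. You have not shown that the pseudo-Euclidean form produced by Theorem~\ref{T:PE} (which arrives via Rozenfeld's identification of $\fC\cP^n$ with a pseudo-Euclidean space) coincides with your proposed radial-plus-Fisher--Rao decomposition. The assertion that the radial direction of a proper cone ``standardly'' acquires a negative square is heuristic: it depends on which quadratic form on $\cW$ one starts from, and Theorem~\ref{T:PE} does not hand you an explicit bilinear form to verify this against. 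Until that identification is made precise, your argument remains an outline; the paper's route, though more of a black box, sidesteps this difficulty by delegating it to Chentsov and Wolf.
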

\begin{proof}
In order to show that the space has index 1, it was shown by Chentsov\cite{Ce3} that this manifold is geodesically convex and that the maximal submanifolds are totally geodesic. From Wolf's theorem \cite{Wo}, we have that the space $R^n_l$ is pseudo convex only if the index $l$ is
1. 
\end{proof}

More precisely, we can refine our statement by stating that:
\begin{theorem}\label{P:pseudoell}
The manifold $S$ is a real pseudo-Elliptic space $S^n_1$ of index one.
\end{theorem}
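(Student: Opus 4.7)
The plan is to refine the pseudo-Euclidean statement of the previous proposition to a pseudo-Elliptic one by incorporating the projective structure already established for $S$. Recall that $S^n_1$ is the pseudo-Elliptic space of index one, namely the projectivization of the unit pseudo-sphere inside an ambient $R^{n+1}_1$, equipped with the indefinite metric descending from the ambient bilinear form $B^{n+1}_1$ under antipodal identification. So the task is to show that $S$ fits this specific projective model rather than only the flat one.

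First, I would collect the two structural inputs already available. By Corollary \ref{C:proj}, $S$ is identified with the paracomplex projective space $\fC\cP^n$, and by Proposition \ref{P:isome} this identification is isometric with the Hermitian projective space over $\fC$, so that $S$ carries a genuinely projective, rather than merely affine, structure. Meanwhile, Theorem \ref{T:PE} and the previous proposition guarantee that the associated indefinite metric on $S$ has signature of index one.

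Second, I would use the Maurer--Cartan structure from Theorem \ref{T:PE} to exhibit the pseudo-spherical normalization. The polar vector ${\bf r}$ appearing in
\[
d{\bf r}=\omega{\bf r}+\omega^sX_s, \qquad dX_i=\omega_i{\bf r}+\omega_i^jX_j
\]
plays the role of the radial coordinate of an ambient $R^{n+1}_1$; the Norden framing recipe singles out the hypersurface $B^{n+1}_1({\bf r},{\bf r})=\mathrm{const}$, which is precisely an index-one pseudo-sphere, while the projective equivalence ${\bf r}\sim \lambda{\bf r}$ built into $\fC\cP^n$ implements the antipodal quotient producing $S^n_1$. Invoking Rozenfeld's theorem (Section 4.1.1 of \cite{Roz97}) in its projective form, rather than the affine form already used for Theorem \ref{T:PE}, identifies this quotient with the standard pseudo-Elliptic model. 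The index is preserved along the way, since the ambient bilinear form already has index one and restriction to the pseudo-sphere plus antipodal quotient is a local isometry.

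The main obstacle is that last identification step: one must check that the Hermitian hyperquadric naturally sitting inside $\fC\cP^n$ matches, under Corollary \ref{C:proj}, the absolute quadric that defines $S^n_1$, so that the Maurer--Cartan framing genuinely descends from the pseudo-sphere in $R^{n+1}_1$ rather than from some other pseudo-Euclidean model with merely the same local signature. This compatibility is exactly what makes Rozenfeld's theorem applicable in its projective version, and it is what allows us to upgrade the previous proposition's flat pseudo-Euclidean statement to the curved pseudo-Elliptic one without loss in the index.
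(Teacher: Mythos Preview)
Your route and the paper's diverge completely. The paper does not use the Maurer--Cartan framing, the pseudo-sphere normalization, or a projective version of Rozenfeld's theorem at all. Instead it argues metrically: it recalls the Bhattacharyya coefficient $d(P,P^*)=\int_\Omega\sqrt{\rho\rho^*}\,d\lambda$ between two probability distributions, cites \cite{BuCoNen99} for the identity $\cos^2\omega=\int_\Omega\sqrt{\rho\rho^*}\,d\lambda$, and then observes that this is exactly the cross-ratio distance formula $\cos^2(\delta/r)=\overline{XY,\alpha\beta}$ that characterizes the pseudo-Elliptic space $S^n_1$. The index-one conclusion is inherited from the previous proposition. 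So the paper's argument is a direct computation of the metric on $S$ in statistical terms, followed by pattern-matching against the known pseudo-Elliptic metric.

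Your structural approach is coherent as a strategy, but the ``main obstacle'' you flag at the end is a genuine gap, and you do not close it. Knowing that $S$ is isometric to the Hermitian paracomplex projective space (Proposition~\ref{P:isome}) and that the signature has index one does not, by itself, pin down \emph{which} projective metric you have landed on; you still have to verify that the absolute hyperquadric inherited from $\fC\cP^n$ coincides with the one defining $S^n_1$, and that the curvature radius is real. Asserting that ``this compatibility is exactly what makes Rozenfeld's theorem applicable'' is circular: the compatibility is the content to be proved, not a hypothesis you can invoke. The paper's Bhattacharyya computation is precisely what supplies this missing identification---it exhibits the $\cos^2$ form of the distance explicitly, so no separate matching of hyperquadrics is required. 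If you want to salvage your approach, you would need either a precise citation in \cite{Roz97} that identifies the Hermitian $\fC\cP^n$ with $S^n_1$ isometrically (not just as projective spaces), or an independent curvature computation; otherwise the argument stops one step short.
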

\begin{proof}

The manifold $S$ equipped with a Riemannian metric $g$ is a Riemannian manifold $(S, g)$. In order to show that the manifold $S$ is a real pseudo-Elliptic space, it is sufficient to find the metric.  

 Bhattacharyya \cite{Ba}  shows that the distance between two points in a manifold of probability distributions (i.e. distance between a pair probability distributions $P, P^*$), is given by:

\[d(P,P^*) = \int_{\Omega}\sqrt{\rho\rho^*}d\lambda,\]
where $\rho$ and $\rho^*$ are the Radon--Nikodym derivatives of $P$ and $P^*$ respectively w.r.t $\lambda$.
However, it was shown in \cite{BuCoNen99} (section 2 p.89) that the distance $d(P,P^*) $ is given by $\cos^2\omega= \int_{\Omega}\sqrt{\rho\rho^*}d\lambda$.

This coincides with the metric on the pseudo-Elliptic manifold:
 \[cos^2\frac{\delta}{r}= \overline{XY,\alpha\beta}\]
where $X,Y$ are points, $\alpha,\beta$ are points at infinity and $\overline{XY,\alpha\beta}$ is the cross-ratio of these points. The real radius of the curvature is $r$. In particular, the interpretation regarding the manifold of probability distributions is as follows: the points $X, Y$ correspond to the probability distributions and $\alpha,\beta$ correspond to the probability distributions on the boundary of the cone.  
This proves the statement about pseudo-Ellipticity.  
\end{proof}

%%%%%%%Lorentz section 3
\section{Theorem B: the fourth Frobenius manifold is a Lorentzian manifold}
Now, we present the proof of the Main Theorem B.
\subsection{Lorentzian manifolds}
\begin{definition}\label{D:Lorentz}
A Lorentzian manifold is a pseudo Riemannian manifold which is equipped with an everywhere non-degenerate, smooth, symmetric metric tensor $g$ of signature $(1,n-1)$ i.e. such that the bilinear form verifies:
 \[B^n_1(x,y)=- x_1y_1 +\sum_{j=2}^nx_jy_j.\]
\end{definition}

\begin{proposition}\label{P:Lor1}
A pseudo-Euclidean manifold in a pseudo-Euclidean space of index 1 is a Lorentzian manifold. 
\end{proposition}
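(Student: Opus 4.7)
The plan is to show that this proposition reduces essentially to matching the defining bilinear form of a pseudo-Euclidean space of index $1$ with the signature condition in Definition \ref{D:Lorentz}. Since a pseudo-Euclidean space $R^n_l$ is by construction a real $n$-dimensional vector space equipped with the non-degenerate symmetric bilinear form
\[
B^n_l(x,y)=-\sum_{i=1}^l x_iy_i +\sum_{j=l+1}^n x_jy_j,
\]
specialising to $l=1$ immediately gives the form
\[
B^n_1(x,y)=- x_1y_1 +\sum_{j=2}^n x_jy_j,
\]
which is precisely the bilinear form required in Definition \ref{D:Lorentz}. Thus the infinitesimal metric tensor $g$ induced on the manifold inherits signature $(1,n-1)$.

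The next step is to upgrade this pointwise identification to the manifold level. Here I would use the fact that for a pseudo-Euclidean manifold in $R^n_1$ the tangent bundle is locally modelled on $R^n_1$ itself, and the metric, being inherited from the ambient bilinear form $B^n_1$, varies smoothly in any coordinate chart. Non-degeneracy is automatic since $B^n_1$ is non-degenerate by construction (no zero eigenvalues, only a single negative one). Symmetry of $g$ follows from the symmetry of $B^n_1$.

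Gathering these observations, the three conditions in Definition \ref{D:Lorentz} — smooth, non-degenerate, symmetric metric tensor of signature $(1,n-1)$ — are all satisfied, so the manifold is indeed a Lorentzian manifold.

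I expect the main (and essentially only) subtlety to be purely notational: ensuring that the ambient index convention of Rozenfeld used in \cite{Roz97} really matches the signature convention adopted in Definition \ref{D:Lorentz} (some authors write $(n-1,1)$ instead of $(1,n-1)$, but the two are equivalent up to a sign change of the metric and define the same class of Lorentzian manifolds). Beyond this, the proof is a direct unwinding of definitions and should be stated in a single short paragraph.
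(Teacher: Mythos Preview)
Your proposal is correct and follows essentially the same route as the paper: the paper's proof simply writes down the bilinear form $B^n_1(x,y)=-x_1y_1+\sum_{j=2}^n x_jy_j$ for $R^n_1$ and invokes Definition~\ref{D:Lorentz}. Your version is more verbose (explicitly checking smoothness, symmetry, and non-degeneracy, and flagging the signature convention), but the underlying argument is the same direct unwinding of definitions.
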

\begin{proof}
A pseudo-Euclidean space of index 1 $R^n_1$ has a bilinear from \[B^n_1(x,y)=- x_1y_1 +\sum_{j=2}^nx_jy_j
.\] Applying the Definition \ref{D:Lorentz}, we can say that a pseudo-Euclidean manifold of index 1 is a Lorentzian manifold. 
\end{proof}
The cone of positive measures of bounded variations belongs to the fifth class (see \, \cite{CoMa}). This means we have a cone defined over the algebra of paracomplex numbers. This cone is also known in other branches as the time future like cone. In particular, we have that:

\begin{proposition} 
The cone $\cC$ defined over the algebra of paracomplex numbers is a pseudo-Riemannian manifold $(M,g)$. This is a differentiable manifold $M$, equipped with an everywhere non-degenerate, smooth, symmetric metric tensor $g$ of signature $(1,n-1)$. 
\end{proposition}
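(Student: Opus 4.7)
The plan is to combine three already-available ingredients: the open cone structure of $\cC$ (giving the smooth manifold), the paracomplex algebra $\fC$ from Theorem \ref{T:+++} (giving the metric), and the index-$1$ pseudo-Elliptic structure on the transversal section $S$ from Proposition \ref{P:pseudoell} (which, via Proposition \ref{P:Lor1}, pins down the signature).

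The first two steps are essentially conceptual. Because $\cC$ is an open, pointed, convex cone in the finite-dimensional real vector space $\cW$, it is automatically an open submanifold of $\cW$, hence a smooth manifold with trivialized tangent bundle $T_p\cC \cong \cW$ at every $p \in \cC$. For the metric, Theorem \ref{T:+++} endows $\cW$ with the structure of an $\fC$-module; writing $z = x + \e y$ with conjugation $\bar z = x - \e y$, the symmetric $\R$-bilinear pairing
\begin{equation*}
\langle z, w\rangle \,:=\, \tfrac{1}{2}\bigl(z\bar w + \bar z w\bigr)
\end{equation*}
reduces on each $\fC$-direction to the split quadratic form $x^2 - y^2$ of signature $(1,1)$. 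Extending fiberwise and transporting consistently by the simply transitive action of $\fG$ recalled in Section~1.1, one obtains a smooth, symmetric, everywhere non-degenerate $(0,2)$-tensor $g$ on $\cC$.

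The signature computation is the substantive step. At any $p$ in the normalized section $S \subset \cC$ one decomposes
\begin{equation*}
T_p\cC \,=\, \R \cdot {\bf r}_p \,\oplus\, T_pS,
\end{equation*}
where ${\bf r}_p$ is the Liouville radial vector at $p$. By Proposition \ref{P:pseudoell} together with Proposition \ref{P:Lor1}, the restriction $g|_{T_pS}$ is Lorentzian of signature $(1, n-2)$. Working in the idempotent basis $\{e_+, e_-\}$ of $\fC$, strict positivity of the measure $p$ translates into $p = p_+ e_+ + p_- e_-$ with $p_\pm > 0$, so a direct computation gives $\langle {\bf r}_p, {\bf r}_p\rangle = p\bar p = p_+ p_- > 0$; and the splitting is $g$-orthogonal because ${\bf r}_p$ is normal to the level set of any degree-one homogeneous real functional (e.g.\ total mass). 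Summing the two contributions yields signature $(1, n-1)$ on $T_p\cC$, which then extends to all of $\cC$ by $\fG$-homogeneity.

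The main obstacle is simply the sign bookkeeping in this last step. One must carefully align the paracomplex conjugation convention with the Lorentzian convention fixed in Definition \ref{D:Lorentz}, in order to distinguish signature $(1, n-1)$ from its mirror $(n-1, 1)$ and to confirm that the unique ``timelike'' direction lies inside $T_pS$ rather than along ${\bf r}_p$. Once the idempotent identity $\langle p, p\rangle = p_+ p_- > 0$ is established and the orthogonality of the radial splitting is checked, the rest of the argument is a standard open-submanifold and homogeneous-cone calculation.
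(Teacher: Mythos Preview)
The paper does not actually supply a proof of this proposition: it is stated without a proof environment, and the only justification offered is the sentence immediately preceding it, namely that the fifth Vinberg cone ``is also known in other branches as the time future like cone,'' together with an implicit appeal to Proposition~\ref{P:Lor1}. In effect the paper identifies $\cC$ with the standard future light cone in Minkowski space and reads off the signature $(1,n-1)$ from that identification and from \cite{CoMa}.

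Your argument is considerably more detailed than the paper's, but it has a genuine gap: you are conflating two different metrics. The pairing $\langle z,w\rangle=\tfrac12(z\bar w+\bar z w)$ built from paracomplex conjugation has signature $(1,1)$ on each $\fC$-coordinate; if $\cW$ carries $m$ independent $\fC$-directions, then ``extending fiberwise'' in the obvious sense yields a form of signature $(m,m)$, not $(1,n-1)$. You then change tack and invoke Proposition~\ref{P:pseudoell}, but the metric in that proposition is the Bhattacharyya/Fisher--Rao metric on $S$ (this is exactly how the paper proves \ref{P:pseudoell}), and you never verify that your paracomplex-derived $g$ restricts to that metric on the transversal slice. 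The computation $\langle{\bf r}_p,{\bf r}_p\rangle=p\bar p=p_+p_-$ similarly treats $p$ as a single paracomplex scalar, which is only literally correct when $\cW\cong\fC$ is $2$-dimensional; for general $n$ the radial vector is not an element of $\fC$ and the formula needs reinterpretation.

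If you want to rescue your approach, you must either (i) exhibit an explicit isomorphism identifying the $\fG$-invariant metric on $\cC$ simultaneously with your paracomplex Hermitian form and with the Fisher--Rao geometry on the slice, or (ii) follow the paper and simply cite the known Lorentzian structure of the future light cone. Route (ii) is what the paper does, in one sentence.
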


For a metric $g$, the signature $(1,n-1)$ implies that we have:
\[g=-dx_1^2+dx_2^2+\dots +dx_n^2.\]

In other words:
\begin{corollary} 
The cone $\cC$ is a Lorentzian manifold. 
\end{corollary}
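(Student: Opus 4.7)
The plan is essentially an immediate application of Definition \ref{D:Lorentz} to the proposition that directly precedes the corollary. The preceding proposition already upgrades $\cC$ from a bare cone to a pseudo-Riemannian manifold $(M,g)$, equipped with an everywhere non-degenerate, smooth, symmetric metric tensor $g$, and pins down the signature as $(1,n-1)$. Reading Definition \ref{D:Lorentz}, these are exactly the defining attributes of a Lorentzian manifold, so the corollary should follow without any further analytic input.

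Concretely, I would structure the proof in three short steps. First, invoke the preceding proposition to extract the data $(M,g)$, with $M$ the cone $\cC$ viewed as a differentiable manifold (its smooth structure coming from the paracomplex/real-linear structure of $\cW$), and $g$ the pseudo-Riemannian metric of signature $(1,n-1)$ obtained there. Second, check the three bullet points of Definition \ref{D:Lorentz}: non-degeneracy, smoothness, and symmetry of $g$ are provided by the proposition, while the signature requirement matches the bilinear form
\[
B^n_1(x,y) = -x_1 y_1 + \sum_{j=2}^{n} x_j y_j,
\]
already identified in Theorem \ref{P:pseudoell} and Proposition \ref{P:Lor1} as the one governing the pseudo-Euclidean space $R^n_1$ to which $\cC$ is locally modelled. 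Third, conclude by citing Proposition \ref{P:Lor1}, which guarantees that a pseudo-Euclidean manifold of index one is Lorentzian, so that the cone $\cC$ falls under the Lorentzian heading.

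There is essentially no obstacle of substance: the corollary is a reformulation of the preceding proposition in the language of Definition \ref{D:Lorentz}. The only point that deserves a one-line justification is that the signature $(1,n-1)$ already established in Theorem \ref{T:PE} and Theorem \ref{P:pseudoell} for the fourth Frobenius manifold $S$ transfers to the ambient cone $\cC$, which is justified by the construction in Section 1 realising $S$ as the section of $\cC$ and by the fact that the bilinear form is inherited from the paracomplex Hermitian structure on $\cC$. Thus the proof reduces to a single sentence matching hypotheses to the definition.
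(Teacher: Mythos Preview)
Your proposal is correct and matches the paper's approach: the paper gives no formal proof at all, introducing the corollary with the phrase ``In other words'', i.e.\ treating it as an immediate restatement of the preceding proposition via Definition~\ref{D:Lorentz}. Your only superfluous detour is the final paragraph about transferring the signature from $S$ to $\cC$ and invoking Theorem~\ref{T:PE}, Theorem~\ref{P:pseudoell}, and Proposition~\ref{P:Lor1}; none of this is needed, since the preceding proposition is already stated for $\cC$ itself with signature $(1,n-1)$, so the corollary follows in one line from the definition alone.
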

As for the manifold of probability distributions, we have that:
\begin{proposition}\label{C:lo}
The manifold of probability distributions $S$ is a projective Lorentzian manifold.  
\end{proposition}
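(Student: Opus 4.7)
The plan is to concatenate three results already in place: Theorem \ref{P:pseudoell}, which shows that $S$ is pseudo-Elliptic of index one; Proposition \ref{P:Lor1}, which converts pseudo-Euclidean index-one data into Lorentzian data; and Corollary \ref{C:proj}, which identifies $S$ with the paracomplex projective space $\fC\cP^n$ and hence supplies the projective character.

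More concretely, I would first read off from Theorem \ref{P:pseudoell} that $S$ carries a pseudo-Riemannian metric whose bilinear form is $B^n_1(x,y) = -x_1 y_1 + \sum_{j=2}^n x_j y_j$. This is precisely the form appearing in Definition \ref{D:Lorentz}, so the signature $(1,n-1)$ condition is satisfied by construction. Next I would promote this pointwise observation to a global statement by invoking Proposition \ref{P:Lor1}, applied to $S$ viewed via Theorem \ref{T:PE} as sitting in $R^n_1$; this yields the Lorentzian conclusion. Finally, I would invoke Corollary \ref{C:proj} to identify $S$ with $\fC\cP^n$; since the paracomplex projective space is projective by construction, this transfers the projective structure to $S$, and combined with the Lorentzian metric just established, we obtain that $S$ is a projective Lorentzian manifold.

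The delicate point I expect to have to spell out carefully is the compatibility between the two structures being glued together: the Lorentzian metric originates from the pseudo-Elliptic model $S^n_1$ and the cross-ratio expression $\cos^2(\delta/r) = \overline{XY,\alpha\beta}$ appearing in the proof of Theorem \ref{P:pseudoell}, whereas the projective structure originates from the identification of $S$ with $\fC\cP^n$ through the $0$-pairs description (Proposition \ref{P:0-pairs} and Lemma \ref{R:1}). One needs to check that the Rozenfeld-type isometry underlying Theorem \ref{T:PE} actually intertwines these two structures, i.e.\ that the projective quotient of the pseudo-Euclidean linear model descends to a projective Lorentzian structure on $\fC\cP^n$ agreeing with the cone-theoretic Lorentzian structure described on $\cC$ in the preceding corollary. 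Once this intertwining is recorded, the proposition reduces to a concatenation of the three labelled results, and the conclusion that $S$ is projective Lorentzian follows immediately.
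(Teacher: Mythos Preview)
Your proposal is correct and follows essentially the same route as the paper: invoke Theorem~\ref{P:pseudoell} to obtain that $S$ is pseudo-Elliptic of index one with bilinear form $B^n_1$, and then apply Definition~\ref{D:Lorentz}. The paper's proof is far more terse than yours---it simply cites Theorem~\ref{P:pseudoell}, writes down $B^n_1$, and appeals directly to Definition~\ref{D:Lorentz}, without passing through Proposition~\ref{P:Lor1} or Theorem~\ref{T:PE}, without separately invoking Corollary~\ref{C:proj} for the projective structure (this being implicit in the pseudo-Elliptic model $S^n_1$), and without the compatibility discussion you flag; so your additional layers are not wrong, but they are more than the paper actually uses.
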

\begin{proof}
A manifold of probability distributions is a pseudo-Elliptic space $S^n_1$, by Theorem\, \ref{P:pseudoell}. The bilinear form is given by: $B^n_1(x,y)=- x_1y_1 +\sum_{j=2}^nx_jy_j$. Applying Definition\, \ref{D:Lorentz}, the conclusion is straight forward. 
\end{proof}

\subsection{The mirror symmetries of the fourth Frobenius manifold}
A central Hermitian hyperquadric form can be written generally as: %(2.78)
\[xQx+c=\sum_i \overline{x}^ix^i+c=0.\]
Those central Hermitian hyperquadrics are called Hermitian ellipsoids. Hermitian ellipsoids, in the paracomplex space $\fC E^n$, are homeomorphic to the topological product of $\R^n$ and a hypersphere in $\R^n$, denoted $S^{n-1}$. We call the hyperquadric given by the equation $xQx = 0$ an absolute hyperquadric of the pseudo-Elliptic space.

\begin{remark}
The equation $xQx = 0$ is the equation of an oval hyperquadric in $\cP^n$. If the vectors $x$ and $y$ represent points $X$ and $Y$ in this space, the vectors $Qx$ and $Qy$ can be regarded as covectors representing the hyperplanes polar to these points with respect to this oval hyperquadric.
\end{remark}

An $n$-dimensional non-Euclidean Riemann space can be determined as an $n$-dimensional projective space in which the distance between two points $x$ and $y$ is given by: 

\[ \cos^2 \frac{\omega}{r}= \frac{(x,y)}{(x,x)\cdot (y,y)},\]
where $(x,y)$ is a bilinear form, $r$ and $\omega$ are real numbers. 
However, in the case of a projective paracomplex space, the distance between points is given by a slightly different formula.

Historically, the idea to construct new type of non Euclidean space was given by C. Segre. Namely, he proposed to introduce new type of form and then to construct new types of spaces. 

Let us introduce the notion of Hermitian form:
\[\{x,y\}=\overline{x}^0 y^0+ \overline{x}^1 y^1+...+\overline{x}^n y^n,\]
with property the that $\{x,y\}= \overline{\{y,x\}}$.
Therefore, the form $\{x,x\}$ is always real.

Now, we can introduce the distance $\omega$ in the paracomplex projective space as follows:
\[\cos^2 (\frac{\omega}{r}) = \frac{ \{x,y \}\cdot \{y,x\} }{\{x,x\}\cdot \{y,y\}},\]
where $r$ is a radius of the curvature of the space. This space is a so-called {\it unitary paracomplex space} $\fC K^n$.

\smallskip
 
For the convenience of the reader, we recall the definition of a unitary paracomplex space $\fC K^n$.
A vector space over the algebra of paracomplex numbers $\fC$ is a vector space endowed with an inner product $\{ \cdot, \cdot\}$ satisfying the following axioms:

\begin{enumerate}
\item $\{a,b\}=\overline{\{b,a\}}$
\item $\lambda\{a,b\}=\{\lambda a,b\}$, where $\lambda \in \fC$
\item$ \{a+b,c\}=\{a,c\}+ \{b,c\}$
\item If $a\neq 0$ then $\{a,a\}>0$.  
\end{enumerate}

The collineation in the projective paracomplex space $\fC\cP^n$ which conserves the distances between two points we call the actions of the space. The matrices of these collineation are given by:
\[
\overline{a^0_i} a^0_j + \overline{a^1_i} a^1_j +...+\overline{a^n_i} a^n_j =\begin{cases} 
1, &\text{if}\quad i= j,\\
0, &\text{if}\quad i\neq j.
  \end{cases}\]
  
  \begin{remark}
 
 Notice that this is a discrete analog of the inner product defined for a paracomplex Hilbert space.
 \end{remark}

In the projective paracomplex space one can easily see that there exists as well another action called anti-collineation as well conserving the distance between two points. Then the interpretation of the unitary paracomplex space $\fC K^n$ is the following one.

\begin{theorem}\label{T:mirr}
The statistical manifold  can be defined as one of the two domains into which the hyperquadric $xQx = 0$ divides the paracomplex projective space $\cP^n$, where the distance between two points $X$ and $Y$ is given by: \[cos^2\frac{\delta}{r}= \overline{XY,\alpha\beta},\]
where $r$ is the curvature.
\end{theorem}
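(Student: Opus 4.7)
By Corollary~\ref{C:proj} and Proposition~\ref{P:isome}, the statistical manifold $S$ is isometric to the Hermitian projective space $\fC\cP^n$ over the paracomplex numbers, equipped with the unitary structure of $\fC K^n$. The plan is to realize the classical Cayley--Klein construction in this paracomplex Hermitian setting: identify $S$ as one of the two open domains cut out of $\fC\cP^n$ by the absolute hyperquadric, and derive the cross-ratio expression as its intrinsic distance.

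First, I would identify the absolute hyperquadric $xQx=0$ with the zero locus of the paracomplex Hermitian form $\{x,x\}$ on $\fC K^n$. Writing $x = x_+ e_+ + x_- e_-$ in the canonical idempotent basis, $\{x,x\}$ is a \emph{real}-valued quadratic form and partitions $\fC\cP^n$ into the three pieces $\{x,x\} > 0$, $\{x,x\} = 0$, and $\{x,x\} < 0$. The positivity axiom (4) for $\fC K^n$, combined with Theorem~\ref{T:+++} (which places the probability measures inside the positive cone $\cC \subset \cW$), confines $S$ to the open domain where $\{x,x\} > 0$. Since the real hypersurface $\{x,x\}=0$ separates $\fC\cP^n$ into precisely two open components, this establishes the first assertion of the theorem.

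Next, I would derive the distance formula. Take two points $X=[x]$ and $Y=[y]$ in $S$, and consider the projective line $\ell$ through them. Its two intersection points with the absolute are the polar conjugates $\alpha,\beta$. A direct projective calculation in a parametrization of $\ell$ gives the cross-ratio
\[
\overline{XY,\alpha\beta} \;=\; \frac{\{x,y\}\,\{y,x\}}{\{x,x\}\,\{y,y\}},
\]
which is exactly the Cayley--Klein definition of the squared cosine of the unitary paracomplex distance $\cos^2(\delta/r)$, where $r$ is the radius of curvature already identified in Theorem~\ref{P:pseudoell}. Compatibility with Bhattacharyya's formula $\cos^2\omega = \int_\Omega \sqrt{\rho\rho^*}\,d\lambda$ has been verified in the proof of Theorem~\ref{P:pseudoell}, so the geometric metric and the information-theoretic metric agree on $S$.

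The main obstacle I anticipate is the careful analysis of the intersection $\ell \cap \{xQx=0\}$ in the paracomplex setting. Because $\fC$ is not a division algebra, a paracomplex projective line can a priori meet a quadric in points through the zero divisors, so that ``the two points $\alpha, \beta$'' are not guaranteed by general position as in the complex case. Confirming that on the open domain $\{x,x\}>0$ the two honest intersection points exist, are uniquely determined, and yield a well-defined real cross-ratio requires the full paracomplex / Norden--Shirokov machinery of Sections~1--2 together with Rozenfeld's formalism in \cite{Roz97}, section 4.4, which is where the proof must really do work beyond merely citing the classical Cayley--Klein picture.
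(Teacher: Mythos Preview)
Your argument is correct, and in fact considerably more explicit than what the paper does. The paper's own proof of this theorem is a two-line citation: it simply says the result follows from Theorem~\ref{T:PE} (the pseudo-Euclidean structure obtained via the Maurer--Cartan/Norden normalisation) together with Theorem~4.3 of Rozenfeld~\cite{Roz97}, and leaves all the Cayley--Klein work to that reference.

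The main difference is the entry point. The paper reaches the hyperquadric picture through the pseudo-Euclidean identification of Theorem~\ref{T:PE} and then defers to Rozenfeld's Theorem~4.3 for the cross-ratio metric. You instead start from Corollary~\ref{C:proj} and Proposition~\ref{P:isome}, landing directly in the Hermitian paracomplex projective space $\fC K^n$, and then carry out the Cayley--Klein construction by hand: identifying the absolute $\{x,x\}=0$, cutting $\fC\cP^n$ into two domains, and deriving $\overline{XY,\alpha\beta}=\{x,y\}\{y,x\}/(\{x,x\}\{y,y\})$. Both routes ultimately rest on Rozenfeld's formalism (you cite his \S4.4, the paper his Theorem~4.3), so they converge; yours has the advantage of making transparent \emph{why} the statistical manifold sits on one side of the absolute (via the positivity of the cone $\cC$ and axiom~(4) of $\fC K^n$) and of flagging the genuine subtlety---the zero-divisor issue on paracomplex projective lines---that the paper's bare citation hides. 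The paper's approach buys brevity; yours buys insight into where the paracomplex structure actually bites.
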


Here $\overline{XY,\alpha\beta}$ is the cross ratio of these points and their polar hyperplanes with respect to the hyperquadric $xQx =0$, with 
 $x$ being an arbitrary vector in the affine space, representing two points in $\fC\cP^n$.
 \begin{proof}
 This follows from the Theorem\, \ref{T:PE} and the Theorem 4.3, in \cite{Roz97}.
 \end{proof}

\begin{remark}
Note that the elliptic space $S^n$ can be defined as the projective space $\cP^n$ in which we have a specific distance relying on the cross ratio $\overline{XY,\alpha\beta}$ where $X, Y$ are two points and  $\alpha, \beta$ are their polar hyperplanes. This metric is defined as:
 \[cos^2\frac{\delta}{r}=\overline{XY,\alpha\beta}\]
where $ \overline{XY,\alpha\beta}$ is the cross ratio of these points and their polar hyperplanes with respect to the imaginary hyperquadric $x^2 = 0$. 

In this way, we achieve the first part of the proof of Main Theorem B.
\end{remark}

\section{Second part of the proof of Theorem B} 
A Vinberg cone generates a class of Lie groups and of Lie algebras. There exists an automorphism group of the Vinberg cone (see section 1.1).  Consider the positive cone $\cC$ of strictly positive measures on a space $(X,\mathcal{F})$, vanishing only on an ideal $I$ of the $\sigma$-algebra $\mathcal{F}$. In our case, the automorphism corresponds to the parallel transport, which we define below. 

Let $\cW$ be the space of signed measures of bounded variations (i.e. signed measures whose total variation  is bounded, vanishing only on an ideal $I$ of the $\sigma$-algebra $\mathcal{F}$). To any parallel transport $h$ in the covector space $\cW^{*}$ of the space $\cW$ of $\sigma$--finite measures, we associate
 \[f\xrightarrow{h} f+h,\] an automorphism of the cone $\cC$:
\begin{equation}
\mu \xrightarrow{h} \nu,\, \text{ where }\, \frac{d\nu}{d\mu}(\omega)\, =\, \exp (h(\omega)), 
\end{equation}
and $d\nu/d\mu$ is the Radon--Nikodym derivative of the measure $\nu$ w.r.t. the measure $\mu$.
This automorphism is a  non--degenerate linear map of $\cW$ which leaves the cone invariant. 

\smallskip 

Denote by $\fG$ the group of all automorphisms $h$ such that $h\, =\, \ln\, \frac{d\nu}{d\mu}.$ 
The commutative subgroup of all ``translations'' of the cone $\cC$ is a simply transitive Lie group. To this Lie group $\fG$ the associated Lie algebra $\fg$ defines the derivation of the cone. 

\smallskip 

We focus on the geometry of $\cH$, defined in the Appendix. It has the {\it projective geometry} of a pencil of straight-lines of the cone $\cC$. The geodesics on the cone $\cC$ are the trajectories of 1-parameter subgroup of the group $\fG$ and can be written in the following way:

\begin{equation}\label{E:t}
f(\omega;s)=f(\omega;0)\exp\{s.h(\omega)\}.
\end{equation}

The geodesics on $\cH$ need to be logarithmic projections on $\cH$ of these trajectories. They are distinct from $f(\omega;s)$ by a normalization constant. In this way, a geodesic---crossing a given point $p(\omega; 0)$ in a given direction---can be determined by:

\begin{equation}\label{E:t}p(\omega;s)=\frac{1}{a(s)}p(\omega;0)\exp\{s.q(\omega)\},\quad \text{where} \quad a(s)=\int_{\Omega} \exp(s.q(\omega))p(\omega;0)d\mu.\end{equation}
By duality, we can obtain a similar approach to the geodesics of the manifold of probability distributions $S$, which are trajectories of 1-parameter subgroups.

\subsection{Real Interpretation of unitary paracomplex spaces $\fC K_n$.}
Recall that $\fC K_n$ is a  paracomplex vector space on which an inner product of vectors is defined. In the $(2n+1)$-dimensional non Euclidean Riemann space $R^{2n+1}_l$, there exist families of so-called {\it paratactic} congruent straight lines. Each 0-pair is in on-to-one correspondence with a ray of our congruence. These families are $2n$ parametric families of straight lines along of which it is possible to apply a 1-parameter group of motions (translations along the congruent straight lines). These are so-called {\it paratactic transformations}. 

That means, two straight lines of such a congruence have a 1-parameter set of common perpendiculars, being of the same length (and not just one perpendicular!). That is, the distance between these two straight lines is the same (we take the length of the perpendicular as the distance).
This type of congruence exists as well in the non-Euclidean pseudo-Riemannian space $R^{2n+1}_{n+1}$. In this space the bilinear form will be given by:
\[B^{2n+1}_n(x,y)= - x^0 y^0 - x^1 y^1 - ...  - x^n y^n + x^{(n+1)} y^{(n+1)} +... + x^{(2n+1)} y^{(2n+1)}.\]

The unitary paracomplex space $\fC K_n$ is isomorphic to the non Euclidean pseudo-Riemannian space $R^{2n+1}_n$. This ends and gives a conclusion to discussions on statements appearing in Theorem B.

\subsection{An addition to Wolf's theorem}
In this paragraph, we prove an additional result to Wolf's classification theorem in \cite{Wo}. His result holds for the real, complex, octonionic and quaternonic number fields. However, there is a gap in what concerns algebras that are not fields: for example, paracomplex numbers. We remedy to this precise situation.  

Let us go back to the automorphism group of the cone $\cC$.
\begin{lemma}
Let $\fG$ be the Lie group of automorphisms of the Vinberg cone $\cW$. Let $H$ be a subgroup of $\fG$ leaving the manifold $\cH$ invariant. Then, 
$H$ is a Lie subgroup. 
 \end{lemma}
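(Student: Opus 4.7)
\smallskip

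The plan is to invoke Cartan's closed subgroup theorem (every topologically closed subgroup of a Lie group is automatically an embedded Lie subgroup), so the whole argument reduces to checking that $H$ is closed in $\fG$ with respect to the Lie group topology inherited from $GL(\cW)$. Since $\fG$ has already been identified (see the discussion after Theorem~\ref{T:+++}) as a real, solvable Lie subgroup of $GL_n(\R)$ acting simply transitively on $\cC$, the ambient Lie structure is at our disposal and only the closedness of the stabilizer of $\cH$ has to be verified.

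First I would record that $H=\{g\in\fG:g(\cH)=\cH\}$ is a subgroup in the purely algebraic sense: the identity fixes $\cH$, and if $g_1,g_2\in H$ then $(g_1g_2)(\cH)=g_1(g_2(\cH))=g_1(\cH)=\cH$ and $g_1^{-1}(\cH)=g_1^{-1}(g_1(\cH))=\cH$. This step is routine and uses nothing beyond the definition.

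Next, for closedness, I would exploit the fact that the action $\fG\times\cW\to\cW$ is continuous (it is just the restriction of the linear action of $GL(\cW)$) and that the manifold $\cH$ of probability distributions is a closed subset of $\cC\subset\cW$, being cut out of the cone by the affine normalization condition $\mu(X)=1$. If $(g_\alpha)_{\alpha\in A}\subset H$ converges to some $g\in\fG$, then for every $p\in\cH$ the sequence $g_\alpha(p)\in\cH$ converges to $g(p)$, and by closedness of $\cH$ this limit lies in $\cH$; the same argument applied to $g_\alpha^{-1}\to g^{-1}$ (inversion being continuous in $\fG$) yields $g^{-1}(\cH)\subseteq\cH$, and consequently $g(\cH)=\cH$, so $g\in H$. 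This establishes that $H$ is closed in $\fG$.

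Finally, I would invoke Cartan's closed subgroup theorem to conclude that $H$ is an embedded Lie subgroup of $\fG$, inheriting a smooth structure for which the inclusion is an immersion and the group operations are smooth. The main (and really the only) subtle point is the second step: one needs to be careful that $\cH$ is indeed closed in the topology used to define $\fG$; this is where the normalization-theoretic description of $\cH$ from Section~2, together with the linearity of the $\fG$-action on $\cW$, is essential. Everything else is soft.
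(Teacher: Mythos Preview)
Your proposal is correct and follows the same overall strategy as the paper: both arguments reduce the claim to Cartan's closed subgroup theorem, so the only work is to verify that $H$ is closed in $\fG$.

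The difference lies in how that closedness is obtained. The paper uses that $\cH$ is a \emph{compact} subset of the cone and appeals to the standard fact that the subgroup of $GL_n(\R)$ leaving a compact set invariant is itself compact (hence closed); this has the side benefit of yielding the subsequent corollary that $H$ is a compact Lie subgroup. You instead use only that $\cH$ is \emph{closed} in $\cW$ and argue directly, via continuity of the action and of inversion, that the setwise stabilizer of a closed set is closed. Your route is a bit more elementary and avoids the compactness hypothesis on $\cH$, but it does not by itself deliver compactness of $H$; for that you would still need the compactness of $\cH$ that the paper invokes.
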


Let $\cH$ be a  non-empty (compact) subspace of the Vinberg cone $\cW$. Consider the subgroup $H$ of $\fG\subset GL_n(\R)$ leaving this $\cH$ invariant. Then, it is known that this subgroup $H$ is compact. 
It remains to apply the Cartan closed subgroup theorem: any closed subgroup $H$ of a Lie group $\fG$ is a Lie subgroup (and thus a submanifold) of $\fG$. 

\begin{corollary}
$H$ is a compact Lie subgroup.  
\end{corollary}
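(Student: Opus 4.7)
The plan is to combine two pieces already assembled in the discussion preceding the corollary: the fact that $H$ is compact as a subgroup of $GL_n(\R)$, and the fact that $H$ carries the structure of a Lie subgroup of $\fG$ by virtue of the preceding lemma. Each ingredient on its own gives only half of the statement; the corollary is their conjunction.

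For the Lie-subgroup half, I would simply quote the preceding lemma together with Cartan's closed subgroup theorem: since $H$ is the set-theoretic stabilizer of the closed subset $\cH \subset \cW$ under the continuous linear action of $\fG \subset GL_n(\R)$, it is closed in $\fG$, and Cartan's theorem upgrades this closed subgroup to an embedded Lie subgroup. Nothing new needs to be proved here.

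For the compactness half, I would argue that $H$ preserves the Hermitian metric structure that $\cH$ inherits from the ambient Vinberg cone. By Theorem~\ref{P:pseudoell} the relevant underlying geometry is that of a pseudo-Elliptic space $S^n_1$, and by Proposition~\ref{P:isome} the invariant data coming from the cone realize $\cH$ as an Hermitian projective object on which every element of $\fG$ preserving $\cH$ must act by isometries. Since $\cH$ is a compact Riemannian manifold, the Myers–Steenrod theorem guarantees that its full isometry group is a compact Lie group, and $H$ embeds into this isometry group. Hence $H$ is compact.

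The corollary then follows by putting the two halves together. The main obstacle is really the compactness step: a priori the stabilizer of a compact set inside $GL_n(\R)$ need not be compact (think of translations fixing a point of an affine space), so one genuinely needs to exploit the Riemannian/Hermitian structure inherited by $\cH$ from the Vinberg cone in order to force $H$ into a compact isometry group. Once that observation is in place, the rest is a direct citation of Cartan's closed-subgroup theorem and of the lemma immediately preceding the corollary.
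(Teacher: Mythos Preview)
Your overall strategy matches the paper's exactly: the corollary is simply the conjunction of compactness (asserted in the paragraph between the lemma and the corollary) with the Lie-subgroup conclusion of the lemma via Cartan's closed-subgroup theorem. The paper gives no separate proof for the corollary; it just records the combination of those two facts.

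Where you diverge is in the justification of compactness. The paper merely writes ``it is known that this subgroup $H$ is compact,'' relying implicitly on $\cH$ being a non-empty compact subspace of the cone. You instead invoke the Hermitian/Riemannian structure on $\cH$ (via Theorem~\ref{P:pseudoell} and Proposition~\ref{P:isome}) and appeal to Myers--Steenrod to embed $H$ into a compact isometry group. This is a genuinely more careful argument: as you correctly point out, the bare stabilizer of a compact set in $GL_n(\R)$ need not be compact, so the paper's ``it is known'' hides exactly the step you make explicit. Your route buys rigor at the cost of importing more machinery; the paper's route is shorter but leaves the reader to supply the missing reason why compactness of $\cH$ forces compactness of $H$.
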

%%%%%%

The manifold of probability distributions is isometric to a pseudo-Riemannian (projective) space. The group of motions is a simple Lie group of type $D_n$ (see \cite{Roz97}), which is associated to a special orthogonal group $SO(1,2n-1)$ group, and of real rank 1. 

Therefore, we have that:
\begin{lemma}
The fourth Frobenius manifold $S$ can be considered as a compact symmetric space of rank 1.
\end{lemma}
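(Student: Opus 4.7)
The plan is to realise $S$ as a homogeneous quotient $G/H$, construct a geodesic involution at a base point, and then identify the rank of the resulting symmetric pair with the real rank of $G$. The key external inputs are already in place: Corollary~\ref{C:proj} identifies $S$ with the paracomplex projective space $\fC\cP^n$, Proposition~\ref{P:isome} realises it isometrically as the Hermitian projective space over $\fC$, and the subsection immediately above identifies the group of motions as the simple Lie group $SO(1,2n-1)$ of type $D_n$, which has real rank one.

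First I would establish compactness. Since $S\cong\fC\cP^n$ and, by the construction in Section~2, points of $\fC\cP^n$ can be realised via the idempotents $e_\pm$ as orbits in $\R\cP^n\times\R\cP^n$, the space $S$ is the continuous image of a compact space and is therefore compact. Alternatively, compactness is immediate from Theorem~\ref{T:mirr}, where $S$ appears as one of the two closed domains into which a projective oval hyperquadric divides the compact projective space.

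Next I would exhibit the symmetric-space structure. Fix a base point $p_0\in S$ and set $H=\mathrm{Stab}_G(p_0)$; the transitivity of the action of $G=SO(1,2n-1)$ stated just above the lemma gives $S\cong G/H$ as smooth manifolds. The geodesic involution $\sigma_{p_0}$ is the anti-collineation of $\fC\cP^n$ that fixes $p_0$ and whose differential acts as $-\mathrm{id}$ on $T_{p_0}S$; its existence is guaranteed by the geodesic convexity invoked in the proof of Theorem~\ref{P:pseudoell}, together with the reflection symmetry of the absolute Hermitian hyperquadric $xQx=0$ of Theorem~\ref{T:mirr}. Conjugation by $\sigma_{p_0}$ is then an involutive automorphism of $G$ whose fixed-point set has $H$ as its identity component, so $(G,H,\sigma_{p_0})$ is a Cartan symmetric pair; transporting $\sigma_{p_0}$ around by $G$ produces a global geodesic symmetry at every point of $S$.

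Finally, for the rank assertion, the rank of the symmetric space $G/H$ is by definition the dimension of a maximal flat totally geodesic submanifold, and for the isotropy-irreducible pair $(G,H)$ here this coincides with the real rank of $G$, which is $1$. The main obstacle is making this last equality precise in the present Lorentzian (rather than Riemannian) setting: one has to verify that the restricted root system attached to $(\fg,\mathfrak{h})$ is of type $A_1$ or $BC_1$, or, more directly, observe via the cross-ratio metric $\cos^2(\delta/r)=\overline{XY,\alpha\beta}$ of Theorem~\ref{T:mirr} that any two distinct points of $S$ determine a unique totally geodesic curve, ruling out any flat totally geodesic submanifold of dimension at least two.
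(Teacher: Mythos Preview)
Your argument is broadly sound and arrives at the same conclusion, but the route differs from the paper's. The paper's proof is a terse classification argument: it invokes the Corollary just above (that the isotropy $H$ is a compact Lie subgroup), recalls that the group of motions is simple of type $D_n$ with real rank one, and then eliminates all possibilities except ``compact symmetric space of rank~1'' by noting that $S$ is projective and of even dimension. No involution is constructed and no flats are analysed; the rank is read off from the type of the isometry group together with the projective model.

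Your approach is instead constructive: you produce compactness from the identification $\fC\cP^n\cong\R\cP^n\times\R\cP^n$, build the Cartan pair $(G,H,\sigma_{p_0})$ by exhibiting an explicit geodesic involution, and then argue rank~1 either via the real rank of $SO(1,2n-1)$ or, more geometrically, by observing from the cross-ratio metric that two points determine a unique totally geodesic curve. This is a genuinely different organisation of the proof: it gives more content (an actual symmetric-space structure rather than an appeal to classification) at the cost of some extra work verifying that the anti-collineation really is the geodesic symmetry in the pseudo-Riemannian sense. Your own caveat about the Lorentzian setting is well placed; the paper sidesteps this entirely by treating the rank as a Lie-theoretic datum of the isometry group rather than as the dimension of a maximal flat. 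Both approaches ultimately lean on the same external input, namely that the group of motions is the rank-one group $SO(1,2n-1)$.
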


\begin{proof} Indeed, this follows from the above arguments: it is a compact Lie group, the group of motions is a simple Lie group of type $D$. Since it is a projective space and of even dimension the only remaining possibility is to have  a compact symmetric space of rank 1.
\end{proof}

\begin{theorem}
Consider the $2n$-dimensional manifold of probability distributions $S$. If $M$ is a totally geodesic submanifold of $S$ then it is a product of real projective spaces $\R\cP^r\times\R\cP^r$, where $1\leq r \leq 2n$.
\end{theorem}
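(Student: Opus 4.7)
The plan is to reduce the classification of totally geodesic submanifolds of $S$ to the analogous problem on a Riemannian product via the idempotent splitting of the paracomplex numbers, and then to invoke the classical classification for $\R\cP^n$.

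First, by Corollary \ref{C:proj}, $S$ is identified with $\fC\cP^n$. The idempotent decomposition $\fC = \R e_+ \oplus \R e_-$ recalled in Section 1.2.1 provides a real diffeomorphism $\Psi : \fC\cP^n \xrightarrow{\sim} \R\cP^n \times \R\cP^n$ sending $[X^0 : \cdots : X^n]$ with $X^j = x^j e_+ + y^j e_-$ to the pair $([x^0 : \cdots : x^n],[y^0 : \cdots : y^n])$. Under $\Psi$, the eigenspace decomposition $T^\fC S = T^{1,0} S \oplus T^{0,1} S$ of $\fK$ (Section 1.4) matches the tangent-bundle splitting of the product, and the Hermitian form $\{x,y\}$ from Section 4.2 restricts on each eigenspace to the Fubini--Study inner product of the corresponding $\R\cP^n$-factor. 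Since $\fK$ is $\nabla$-parallel and preserves each distribution, the Levi--Civita connection of $S$ agrees with the product of the two real Levi--Civita connections.

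Second, I would run a de Rham--type argument for totally geodesic submanifolds of the product. Given such an $M$, if the tangent spaces split as $T_p M = (T_p M \cap T^{1,0}_p S) \oplus (T_p M \cap T^{0,1}_p S)$, then both distributions are parallel along $M$ and invariant under the restricted holonomy, which respects the product decomposition; hence $M$ decomposes as a Riemannian product $M_+ \times M_-$, with $M_\pm$ totally geodesic in the corresponding $\R\cP^n$-factor. Applying the classical classification of totally geodesic submanifolds of $\R\cP^n$ (they are exactly the linear projective subspaces $\R\cP^{r}$, $0 \leq r \leq n$) then yields $M \cong \R\cP^{r_+} \times \R\cP^{r_-}$.

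The main obstacle I anticipate is twofold and sits in the last step: justifying why the tangent spaces of $M$ split $\fK$-invariantly (ruling out ``diagonal'' totally geodesic submanifolds), and then forcing $r_+ = r_-$. For both points I would exploit the involutive anti-collineation of $\fC\cP^n$ induced by paracomplex conjugation $\e \mapsto -\e$ recalled in Section 2.1.1, which corresponds under $\Psi$ to the factor-swap $(p,q) \mapsto (q,p)$ and is an isometry of the pseudo-Elliptic metric by the mirror symmetry of Theorem \ref{T:mirr}; together with the rank-one symmetric-space structure established in the preceding lemma, this lets one realise $M$ as the fixed locus of an involution of $S$ of the type considered in Wolf \cite{Wo}, which both ensures $\fK$-invariance of $TM$ and exchanges the two factors, giving $r_+ = r_- =: r$. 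The bound $r \leq n$ (in particular $r \leq 2n$) then follows from the dimension count $2r = \dim M \leq \dim S = 2n$, and the conclusion $M \cong \R\cP^r \times \R\cP^r$ drops out.
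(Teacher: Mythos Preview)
Your approach is genuinely different from the paper's and, in a sense, more faithful to the stated theorem. The paper does not attempt a classification at all: it simply invokes the principle that fixed-point sets of isometries are totally geodesic, writes down the explicit isometry $f:\fC\cP^{2n}\to\fC\cP^{2n}$, $f:(a,b)\mapsto(a,-b)$ with $a=(z_1,\dots,z_{r+1})$, $b=(z_{r+2},\dots,z_{n+1})$, observes that its fixed set is $\fC\cP^{r}\cong\R\cP^{r}\times\R\cP^{r}$, and stops. In other words, the paper's argument establishes only that each $\R\cP^{r}\times\R\cP^{r}$ \emph{is} a totally geodesic submanifold, not that every totally geodesic submanifold is of this form; the ``if\dots then'' direction of the theorem as stated is not addressed. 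Your de~Rham-style splitting argument, together with the classical list for $\R\cP^{n}$ and the conjugation symmetry to force $r_{+}=r_{-}$, is precisely what would be needed to upgrade this to an actual classification.

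That said, the obstacle you flag is real and I do not think your proposed fix resolves it. In the Riemannian product $\R\cP^{n}\times\R\cP^{n}$ the diagonal $\{(p,p)\}$ is itself the fixed locus of the factor-swap isometry and hence totally geodesic, yet it is a single $\R\cP^{n}$ and not a product $\R\cP^{r}\times\R\cP^{r}$; more generally, graphs of isometries between linear subspaces of the two factors give further ``skew'' totally geodesic submanifolds. Invoking the swap involution cannot rule these out, since they are exactly the submanifolds that involution produces. The way out, if there is one, has to come from the \emph{pseudo}-Riemannian nature of the Hermitian metric on $\fC\cP^{n}$ (the two eigenbundles of $\fK$ pair nontrivially under the paracomplex Hermitian form, so the diagonal is null rather than nondegenerate), not from a purely Riemannian product picture; your second paragraph, which models the metric as a genuine Riemannian product of two Fubini--Study factors, loses exactly this information. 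If you want to push your route through, the splitting argument should be run with the indefinite metric and the condition that $M$ be nondegenerate, which is what eliminates the diagonal-type examples.
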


\begin{proof}
The $2n$-dimensional manifold of probability distributions are by the statement (Lemma\, \ref{R:1} and Proposition\, \ref{P:0-pairs}) identified to a paracomplex projective space, which in turn is isomorphic to a pair of real projective spaces of real dimension $2n$. Now, we invoke the following argument of fixed point sets: for any isometry $f : M \to M$, the fixed point set is a totally geodesic submanifold of $M$.
Taking the isometry $f: \fC\cP^{2n}\to \fC\cP^{2n}$ in the paracomplex projective space $\fC\cP^{2n}$ such that $f: (a,b)\to (a,-b)$, where $a=(z_1,\dots, z_{r+1})$ and $b=(z_{r+2},\dots, z_{n+1})$, the set of points $(a,0)$ forms a fixed set under $f$. So, the fixed point set is $\fC\cP^r$. So, in particular, $\fC\mathbb{P}^r$ being isomorphic to $\R\cP^r\times \R\cP^r$ for $1\leq r \leq 2n$ it defines a totally geodesic manifold.  
\end{proof}

\begin{remark}
 This completes Wolf's theorem \cite{Wo}, which considers totally geodesic submanifolds for spaces defined over the fields $\R, \C, \mathbb{O},\mathbb{H}$. Here we define this for the algebra of paracomplex numbers. 
\end{remark}

We have that the manifold of probability distributions is decomposed into a pair of totally geodesic submanifolds. 
A submanifold $N$ of a Riemannian manifold $(M,g)$ is called totally geodesic if any geodesic on the submanifold $N$ with its induced Riemannian metric $g$ is also a geodesic on the Riemannian manifold $(M,g)$.

\begin{corollary}\label{C:sbmfd}
The $2n$-dimensional manifold of probability distributions $S$ has a pair of totally geodesic submanifolds, being real projective spaces.
\end{corollary}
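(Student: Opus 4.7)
The approach is to specialize the preceding theorem and then exploit the paracomplex decomposition coming from the pair of idempotents $e_{+},e_{-}$ introduced in Section 1. First, by Corollary \ref{C:proj}, the manifold $S$ is identified with the paracomplex projective space $\fC\cP^{n}$, which by the previous theorem admits totally geodesic submanifolds of the form $\R\cP^{r}\times \R\cP^{r}$ for $1\le r\le 2n$. Taking $r=n$ already yields such a totally geodesic submanifold, and the plan is to extract from it the two claimed real projective factors.

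The key step is to produce the pair of real projective submanifolds as fixed point sets of two distinct isometries of $\fC\cP^{n}$ associated with the idempotents $e_{\pm}$. Writing a homogeneous paracomplex coordinate as $X^{i}=x^{i}e_{+}+y^{i}e_{-}$, the maps
\[
f_{+}\colon [x^{i}e_{+}+y^{i}e_{-}]\longmapsto [x^{i}e_{+}-y^{i}e_{-}],\qquad f_{-}\colon [x^{i}e_{+}+y^{i}e_{-}]\longmapsto [-x^{i}e_{+}+y^{i}e_{-}],
\]
are involutive anti-collineations preserving the Hermitian form $\{\cdot,\cdot\}$ of the unitary paracomplex space $\fC K^{n}$, hence isometries of $\fC\cP^{n}$ equipped with its pseudo-Elliptic metric of Theorem \ref{P:pseudoell}. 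Their fixed point sets are precisely the loci $\{y^{i}=0\}$ and $\{x^{i}=0\}$, each of which is a copy of $\R\cP^{n}$.

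The step I would carry out next is to invoke the standard fact (already used implicitly in the proof of the preceding theorem) that the fixed point set of an isometry of a Riemannian manifold is a totally geodesic submanifold. Applied to $f_{+}$ and $f_{-}$, this delivers the desired pair of totally geodesic submanifolds of $S$, each isometric to $\R\cP^{n}$; they are the two factors distinguished by the idempotent decomposition $\fC=\R e_{+}\oplus \R e_{-}$ and, at the tangent level, by the paracomplex splitting $T^{1,0}M\oplus T^{0,1}M$ of Section 1.

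The main obstacle I anticipate is purely bookkeeping: verifying that the $f_{\pm}$ are genuine isometries of the pseudo-Elliptic metric, not merely of the underlying projective structure. This reduces to checking that conjugation by the reflection $e_{+}\leftrightarrow -e_{+}$ (respectively $e_{-}\leftrightarrow -e_{-}$) preserves the Hermitian form $\{x,y\}=\overline{x}^{0}y^{0}+\dots+\overline{x}^{n}y^{n}$, which is immediate from the definition of paracomplex conjugation $\overline{x+\e y}=x-\e y$ together with $e_{+}e_{-}=0$. Once this is in hand, the corollary follows directly from the previous theorem: the product $\R\cP^{n}\times\R\cP^{n}$ produced there is realised concretely as the pair of totally geodesic submanifolds fixed by $f_{+}$ and $f_{-}$.
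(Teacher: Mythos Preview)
Your proposed isometries $f_{+}$ and $f_{-}$ are actually the identity map on $\fC\cP^{n}$, so their fixed-point sets are the whole space, not copies of $\R\cP^{n}$. Concretely, with $X^{i}=x^{i}e_{+}+y^{i}e_{-}$ one has
\[
x^{i}e_{+}-y^{i}e_{-}=(e_{+}-e_{-})\,X^{i}=(-\e)\,X^{i},\qquad -x^{i}e_{+}+y^{i}e_{-}=(e_{-}-e_{+})\,X^{i}=\e\,X^{i},
\]
using $e_{\pm}^{2}=e_{\pm}$, $e_{+}e_{-}=0$ and $e_{-}-e_{+}=\e$ from Section~1. Since $\pm\e$ are \emph{invertible} paracomplex scalars ($\e^{2}=1$), the projective class $[X^{i}]$ is unchanged: $f_{+}([X])=[-\e X]=[X]$ and $f_{-}([X])=[\e X]=[X]$. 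Under the identification $\fC\cP^{n}\cong \R\cP^{n}\times\R\cP^{n}$, $[X]\mapsto([x],[y])$, your maps read $([x],[y])\mapsto([x],[-y])=([x],[y])$ and similarly for $f_{-}$, confirming that nothing moves. Thus the central step---extracting each $\R\cP^{n}$ factor as a fixed-point set of a global isometry---fails as written.

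The paper treats the corollary as an immediate consequence of the preceding theorem, which already produces $\fC\cP^{r}\cong\R\cP^{r}\times\R\cP^{r}$ as a totally geodesic submanifold via the isometry $(a,b)\mapsto(a,-b)$ acting on \emph{blocks} of paracomplex coordinates (not on the $e_{\pm}$-components of each coordinate). The ``pair of real projective spaces'' in the corollary then refers to the two factors of that product. If you wish to realise each factor individually as a fixed-point set, you need a genuinely nontrivial involution: for instance paracomplex conjugation $[X]\mapsto[\overline{X}]$ (which swaps the two $\R\cP^{n}$ factors and fixes the diagonal $\R\cP^{n}$), rather than scalar multiplication by a unit of $\fC$.
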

Now, we prove the following result:
\begin{theorem}\label{T:pro}
The manifold $S$ is isomorphic to $\R\cP^n\times \R\cP^n$. 
\end{theorem}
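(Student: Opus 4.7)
The plan is to chain together three identifications that have already been established in the paper and then carry out the idempotent decomposition of the paracomplex projective space. By Corollary~\ref{C:proj}, the manifold $S$ is identified with the paracomplex projective space $\fC\cP^{n}$. Hence it suffices to prove the purely algebraic isomorphism $\fC\cP^{n}\cong \R\cP^{n}\times\R\cP^{n}$.

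First I would use the canonical basis $\{e_{+},e_{-}\}$ of $\fC$ introduced in Section~1.2.1, with $e_{+}^{2}=e_{+}$, $e_{-}^{2}=e_{-}$, $e_{+}\cdot e_{-}=0$, to obtain the ring decomposition $\fC\cong\R\oplus\R$. Extending this splitting coordinatewise to the free module $\fC^{\,n+1}$ yields
\[
\fC^{\,n+1}\;=\;e_{+}\R^{n+1}\oplus e_{-}\R^{n+1},
\]
so any homogeneous vector $X=(X^{0},\dots,X^{n})\in\fC^{\,n+1}$ can be written uniquely as $X^{i}=x^{i}e_{+}+y^{i}e_{-}$ with $(x^{i}),(y^{i})\in\R^{n+1}$.

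Next I would analyse the projectivisation. A paracomplex scalar $\lambda=\alpha e_{+}+\beta e_{-}$ is invertible in $\fC$ exactly when both $\alpha$ and $\beta$ are nonzero, and multiplication gives $\lambda X^{i}=(\alpha x^{i})e_{+}+(\beta y^{i})e_{-}$. Therefore the equivalence classes of homogeneous coordinates under the $\fC^{\,\times}$-action factor through the product of two independent $\R^{\times}$-actions on $\R^{n+1}\setminus\{0\}$. This identifies
\[
[X^{0}:\cdots:X^{n}]\;\longleftrightarrow\;\bigl([x^{0}:\cdots:x^{n}],\,[y^{0}:\cdots:y^{n}]\bigr)\in\R\cP^{n}\times\R\cP^{n},
\]
which is the required diffeomorphism. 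I would then note that this is precisely the real interpretation of a paracomplex projective space recorded in Section~1.3 and used (for a general $r$) in the proof of the preceding theorem, so the isomorphism is actually the restriction to $r=n$ of the decomposition invoked there.

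The step I expect to need the most care is the projectivisation: one must verify that passing to the quotient by the \emph{full} group of invertible paracomplex scalars (and not by all of $\fC\setminus\{0\}$, which also contains zero divisors $\alpha e_{+}$ and $\beta e_{-}$) is what produces a genuine product, and that the zero-divisor directions are automatically excluded by the requirement that the original vector $X\in\fC^{\,n+1}$ be non-isotropic in each of the two $\R^{n+1}$-factors, i.e.\ neither $(x^{i})$ nor $(y^{i})$ vanishes. Once this point is clarified, the identification is a diffeomorphism of smooth manifolds, and combined with Corollary~\ref{C:proj} it gives $S\cong \R\cP^{n}\times\R\cP^{n}$. (This is also consistent with Corollary~\ref{C:sbmfd}, where the two real projective factors appear as the totally geodesic submanifolds fixed by the paracomplex conjugation.)
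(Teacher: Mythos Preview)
Your proposal is correct and follows the same logical route as the paper: identify $S$ with $\fC\cP^{n}$ (the paper does this via Proposition~\ref{P:0-pairs} and Lemma~\ref{R:1}, you via their consequence Corollary~\ref{C:proj}), and then use the product decomposition $\fC\cP^{n}\cong\R\cP^{n}\times\R\cP^{n}$. The only difference is that the paper simply cites Rozenfeld for this last isomorphism (and has already sketched it in Section~2.1 using the same idempotent splitting $X^{i}=x^{i}e_{+}+y^{i}e_{-}$), whereas you spell out the idempotent decomposition and the projectivisation in detail, including the careful remark about quotienting by $\fC^{\times}$ rather than $\fC\setminus\{0\}$; this extra care is a genuine clarification but not a different strategy.
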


\begin{proof}
Indeed, by Proposition\, \ref{P:0-pairs}, we have that $S$ is a 0-pair. By Lemma \ref{R:1} a 0-pair is a projective space $P^n$. An $n$-dimensional paracomplex projective space is isomorphic to a cartesian product of $n-$dimensional real projective spaces $\R\cP^n\times \R\cP^n$ (see \cite{Roz97}). So, the manifold $S$ is isomorphic to the cartesian product $\R\cP^n\times \R\cP^n$.
\end{proof}
\begin{corollary}\label{}
The manifold $S$ is a projective variety.
\end{corollary}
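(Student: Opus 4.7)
The plan is to reduce everything to Theorem \ref{T:pro}, which identifies $S$ with the cartesian product $\R\cP^n\times \R\cP^n$, and then to realise this product as the zero locus of a finite family of homogeneous real polynomial equations in a single real projective space. First, I would recall that $\R\cP^n$ is itself (tautologically) a real projective variety: it \emph{is} the ambient space, defined by the empty set of equations. The question therefore reduces to showing that the cartesian product of two real projective varieties carries the structure of a real projective variety.

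For this I would invoke the Segre embedding. Writing homogeneous coordinates $[x^0:\dots:x^n]$ on the first factor and $[y^0:\dots:y^n]$ on the second, I would define
\[
\sigma:\R\cP^n\times \R\cP^n\longrightarrow \R\cP^{N},\qquad N=(n+1)^2-1,
\]
by $\sigma([x^i],[y^j])=[z^{ij}]$ with $z^{ij}=x^iy^j$. This map is well-defined on equivalence classes, injective, and its image is the closed subset of $\R\cP^N$ cut out by the homogeneous quadratic equations
\[
z^{ij}z^{kl}-z^{il}z^{kj}=0,\qquad 0\le i,j,k,l\le n,
\]
i.e.\ by the vanishing of all $2\times 2$ minors of the $(n+1)\times(n+1)$ matrix $(z^{ij})$. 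Since these equations have real coefficients, their zero locus is a genuine real projective variety, and combining with Theorem \ref{T:pro} transfers this structure to $S$.

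The argument is essentially routine and there is no serious obstacle. The only point to verify with some care is that the Segre construction, classically stated over an algebraically closed field, goes through over $\R$ with the same defining equations — which it does, since the rank-one condition on $(z^{ij})$ is intrinsic and the minors have real coefficients. As an alternative route, one could bypass the Segre map entirely and exploit the paracomplex projective identification of Corollary \ref{C:proj}, together with the fact that $\fC\cP^n$ admits a tautological presentation as a real projective variety once the decomposition $\fC=\R e_{+}\oplus\R e_{-}$ is used to split paracomplex homogeneous coordinates into real ones.
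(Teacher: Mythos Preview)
Your argument is correct and follows exactly the paper's approach: the paper invokes Theorem~\ref{T:pro} and the fact that a product of projective varieties is a projective variety, which you justify explicitly via the Segre embedding. The only difference is that the paper states the product fact without proof, whereas you spell out the rank-one minors description; this is a harmless elaboration, not a different route.
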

\begin{proof}

The product of two projective varieties is a projective variety. Since $\R\cP^n$ are projective varieties, the statement follows directly.
 \end{proof}

\begin{proposition}\label{P:Lo}
The manifold of probability distributions is a non-orientable Lorentzian manifold. 
\end{proposition}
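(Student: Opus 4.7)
The plan is to split the proposition into its two independent claims: that $S$ is Lorentzian, and that $S$ is non-orientable. For the Lorentzian half, nothing new is needed; I would simply invoke Proposition~\ref{C:lo}. Theorem~\ref{P:pseudoell} has already identified $S$ with the pseudo-Elliptic space $S^n_1$ of index one, whose natural bilinear form
\[
B^n_1(x,y) \;=\; -x_1 y_1 + \sum_{j=2}^{n} x_j y_j
\]
has signature $(1,n-1)$, precisely the signature Definition~\ref{D:Lorentz} demands. The real content of the proposition beyond Proposition~\ref{C:lo} is therefore the non-orientability assertion.

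For non-orientability, the plan is to exploit the explicit diffeomorphism $S\cong \R\cP^{n}\times\R\cP^{n}$ supplied by Theorem~\ref{T:pro} and to reduce the statement to a standard property of real projective space. The first Stiefel--Whitney class of a product splits as
\[
w_1\!\left(\R\cP^{n}\times\R\cP^{n}\right) \;=\; \pi_1^{\,*}\,w_1(\R\cP^{n}) + \pi_2^{\,*}\,w_1(\R\cP^{n}),
\]
so as soon as $w_1(\R\cP^{n})$ is nontrivial, the product is non-orientable. Since $w_1(\R\cP^{n})$ is the generator of $H^1(\R\cP^{n};\Z/2)\cong\Z/2$ precisely when $\R\cP^{n}$ fails to be orientable, this forces $w_1(S)\neq 0$ in that range and hence $S$ is non-orientable.

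The main obstacle I expect is the parity of $n$: $\R\cP^{n}$ is non-orientable only for $n$ even, so the bare $w_1$ argument does not close for odd $n$. To cover that case uniformly, my plan is to bring in the mirror-symmetry structure isolated in Theorem~\ref{T:mirr} and announced in Main Theorem~B. The paracomplex conjugation $X\mapsto\overline{X}$ restricts to a fixed-point-free isometric involution of $\fC\cP^{n}$ that swaps the two domains separated by the absolute Hermitian hyperquadric $xQx=0$; in the product model $\R\cP^{n}\times\R\cP^{n}$ it is realised by $(x,y)\mapsto(y,x)$, which is orientation-reversing on the orientable double cover. Presenting $S$ as the quotient by this fixed-point-free orientation-reversing involution then exhibits the orientable two-sheeted cover promised in Main Theorem~B and forces $S$ to be non-orientable independently of the parity of $n$.
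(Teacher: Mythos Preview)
Your Lorentzian half and your even-$n$ non-orientability argument are exactly what the paper does: the paper also cites Proposition~\ref{C:lo} for the Lorentzian structure, invokes Theorem~\ref{T:pro} for the product decomposition, and then uses the elementary fact that a product is orientable iff both factors are (your Stiefel--Whitney computation is just a restatement of this). The paper does not attempt a separate odd-$n$ argument; it simply records that $S$, being a paracomplex projective space, has even real dimension and writes the factors as $\R\cP^{2m}$, thereby placing itself in the even case from the outset.

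Your odd-$n$ repair, however, does not work as written. The paracomplex conjugation $X\mapsto\overline{X}$, realised in the product model as $(x,y)\mapsto(y,x)$, is \emph{not} fixed-point-free: it fixes the entire diagonal $\{(x,x)\}\subset\R\cP^{n}\times\R\cP^{n}$, so you cannot form a free quotient manifold from it. More fundamentally, Theorem~\ref{T:pro} identifies $S$ \emph{with} $\R\cP^{n}\times\R\cP^{n}$, not with a quotient of it; the two-sheeted cover in Main Theorem~B is obtained the other way round (as a cover \emph{of} $S$), and the argument given there is the standard $S^{n}\to\R\cP^{n}$ covering, not a swap quotient. So for odd $n$ the product $\R\cP^{n}\times\R\cP^{n}$ is genuinely orientable and your involution does not rescue the claim. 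If you want to align with the paper, you should instead argue---as the paper does---that the relevant projective dimension is even, and drop the conjugation-quotient paragraph.
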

\begin{proof}
Let us apply Corollary\, \ref{C:lo} stating that the manifold of probability distributions is a Lorentzian manifold. The dimension of this manifold is even: it is a projective paracomplex space by Lemma \ref{R:1} and Proposition \ref{P:0-pairs}). 

Theorem \ref{T:pro} states that the manifold $S$ is isomorphic to $\R\cP^{2m}\times \R\cP^{2m}$. By topological arguments, an even dimensional real projective space in non-orientable. Now, since the cartesian product of a pair of manifolds $M\times N$ is orientable iff the manifolds $M$ and $N$ are orientable, the conclusion is straight forward. 
\end{proof}

%%%%%%%%%%%%
We now prove the statement:

\begin{theorem}\label{T:nono}
The class of fourth Frobenius manifold is:
\begin{enumerate}
\item geodesically convex, 
\item non-orientable,
\item non isochronous (time-sense not conserved) in the sense of Calabi--Markus (see \cite{CaMa}, for the exact terminology) 
\end{enumerate}
even dimensional Lorentzian manifolds.
\end{theorem}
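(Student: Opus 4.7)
The plan is to assemble the four assertions from results already in place, supplementing them with a dedicated argument for the Calabi--Markus non-isochronicity. Two structural identifications stand in the background throughout: by Corollary \ref{C:proj} the manifold $S$ is the paracomplex projective space $\fC\cP^{n}$, and by Theorem \ref{T:pro} this is real-isomorphic to $\R\cP^{n}\times\R\cP^{n}$, immediately yielding that $S$ has even real dimension $2n$.

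For the Lorentzian property I would chain Theorem \ref{P:pseudoell}, which puts on $S$ the pseudo-Elliptic metric $S^{n}_{1}$ of index one, with Proposition \ref{P:Lor1}, which upgrades a pseudo-Euclidean index-one space to a Lorentzian manifold in the sense of Definition \ref{D:Lorentz}. Geodesic convexity is inherited from Chentsov's theorem \cite{Ce3} already invoked in the proof that the index equals $1$; alternatively it follows from the $0$-pair description (Proposition \ref{P:0-pairs}), because the projective lines joining any two points of $\fC\cP^{n}$ stay inside $\fC\cP^{n}$. Non-orientability is exactly Proposition \ref{P:Lo}, whose proof rests on the product decomposition $S\cong\R\cP^{2m}\times\R\cP^{2m}$ and the fact that a product is orientable only if each factor is.

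The substantive new point is non-isochronicity. The strategy is to exhibit an involutive isometry of $S$ that globally reverses the time-orientation of the Lorentzian metric, which by \cite{CaMa} is what \emph{non-isochronous} means. The natural candidate is the paracomplex conjugation $z\mapsto\overline{z}$ on $\fC\cP^{n}$; under the real realisation $S\cong\R\cP^{n}\times\R\cP^{n}$ this conjugation exchanges the two factors, which geometrically are the idempotent eigenspaces $E^{m}_{+}$ and $E^{m}_{-}$ of the structure endomorphism $\fK$ described in Section~1.3. Since the time-like direction of the signature $(1,2n-1)$ metric is anchored in the paracomplex splitting $T^{+}M\oplus T^{-}M$, swapping the two idempotents reverses the sign of the time-component of every tangent vector, so no continuous global selection of the future cone survives. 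This is fully consistent with the orientable $2$-sheeted cover appearing in Theorem B, which serves simultaneously as the orientation and the time-orienting cover.

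The main obstacle lies entirely in this last step: one must check with care that the paracomplex conjugation descends from the free module $\fC\fM^{n+1}$ to the projective quotient $\fC\cP^{n}$ (which it does because conjugation commutes with the scalar action of $\fC^{\times}$ up to the central involution that stabilises the projectivisation), and that its differential inverts the two halves of the Lorentzian null cone rather than merely permuting them. Granting this, the theorem reduces to the conjunction of the four verifications above.
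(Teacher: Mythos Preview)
Your treatment of the Lorentzian structure, of geodesic convexity, and of non-orientability matches the paper almost line for line: Proposition~\ref{P:Lo} (equivalently the chain Theorem~\ref{P:pseudoell} $+$ Proposition~\ref{P:Lor1}), Chentsov's result, and the product decomposition $S\cong\R\cP^{2m}\times\R\cP^{2m}$ together with the ``product is orientable iff each factor is'' argument are exactly what the paper invokes.

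The divergence is in item~(3). The paper does not construct any time-reversing isometry; having established that $S$ is non-orientable and even-dimensional, it simply appeals to Section~3 of \cite{CaMa} and reads off time-like non-orientability (non-isochronicity) directly from the Calabi--Markus analysis of relativistic space forms. Your route through paracomplex conjugation is a genuinely different and more constructive idea, but the gap you yourself flag is real and not merely a technicality: the splitting $T^{+}M\oplus T^{-}M$ is into two half-dimensional eigenspaces of $\fK$, not into a time-like line and a space-like hyperplane, so the claim that ``swapping the idempotents reverses the sign of the time-component of every tangent vector'' does not describe what actually happens under the index-one form $B^{n}_{1}(x,y)=-x_{1}y_{1}+\sum_{j\geq 2}x_{j}y_{j}$. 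To close your argument you would have to identify explicitly how the single negative direction of the Lorentzian metric sits with respect to the paracomplex structure and then verify that conjugation sends future cones to past cones rather than merely inducing an isometry. The paper sidesteps this entirely by outsourcing the conclusion to \cite{CaMa}.
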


\begin{proof}
By Proposition \ref{P:Lo} we know that $S$ is a Lorentzian manifold. 
The first (1) follows from the statement in Chensov \, \cite{Ch64}.  For (2), we can use the knowledge developed and acquired above. Indeed, since  $\R\cP^n$ is not orientable for $n$ even and applying the fact that a cartesian product of manifolds is orientable iff both manifolds are orientable it follows that since $S$ is of even dimension, $S$ is a non-orientable manifold. Now, applying the section 3 from \cite{CaMa} this implies that $S$ is geodesically convex and  time-like non orientable.
\end{proof}

\begin{corollary}
The fourth Frobenius manifold is uniquely determined by an orientable 2-fold covering. 
\end{corollary}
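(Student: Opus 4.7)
The plan is to invoke the classical theorem from differential topology that every connected, non-orientable smooth manifold $M$ admits, up to isomorphism, a unique connected orientable two-sheeted covering $\pi:\tilde{M}\to M$---the orientation double cover---together with a canonical free $\Z/2$-action by deck transformations, under which $M$ is recovered as the quotient $\tilde{M}/(\Z/2)$. Existence is witnessed by the explicit construction $\tilde{M}=\{(p,\mu_p)\mid p\in M,\ \mu_p\text{ an orientation of }T_pM\}$, which carries a canonical orientation. Uniqueness follows from the Galois correspondence for covering spaces: the kernel of the orientation character $w_1:\pi_1(M)\to\Z/2$ is the unique index-$2$ subgroup corresponding to orientation-preserving loops, and so determines the cover uniquely.

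First, I would apply Theorem~\ref{T:nono}, which establishes that the fourth Frobenius manifold $S$ is a connected, non-orientable, smooth (even-dimensional) Lorentzian manifold. This makes the above general theorem directly applicable and yields simultaneously existence and uniqueness of an orientable $2$-fold covering $\tilde S\to S$, together with the identification of $S$ as the quotient of $\tilde S$ by the free deck $\Z/2$-action. This is already the content of the statement we wish to prove.

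Second, I would make the cover completely explicit using Theorem~\ref{T:pro}. Since $S\cong \R\cP^{n}\times\R\cP^{n}$ with $n$ even, the fundamental group is $\pi_1(S)\cong\Z/2\oplus\Z/2$; each $\R\cP^n$ factor is itself non-orientable, so $w_1$ sends both generators to the non-trivial element of $\Z/2$. Its kernel is the diagonal subgroup, and consequently the orientation double cover is realised concretely as $\tilde S=(S^n\times S^n)/(\Z/2)$, where $\Z/2$ acts by the simultaneous antipodal involution on both factors. The residual anti-diagonal antipodal action furnishes the deck transformation recovering $S$.

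The main---and essentially only---subtlety is verifying that the hypotheses of the orientation double cover theorem are indeed met: connectedness, paracompactness, and smoothness of $S$. All three are secured by the identification of $S$ with the compact smooth projective variety $\R\cP^n\times\R\cP^n$ given by Theorem~\ref{T:pro} and its corollary, so no further technical work is required.
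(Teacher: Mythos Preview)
Your proof is correct and in fact more complete than the paper's own. The paper's argument is extremely terse: it simply recalls the antipodal quotient $S^n\to\R\cP^n$ with deck group $\Z_2$ and then defers to Section~3 of Calabi--Markus \cite{CaMa} for the general Lorentzian covering framework. It does not explicitly address the product structure $S\cong\R\cP^n\times\R\cP^n$, nor does it isolate which index-$2$ subgroup of $\pi_1(S)\cong\Z/2\oplus\Z/2$ corresponds to the orientation cover. Your approach via the orientation character $w_1$ and the identification of its kernel as the diagonal subgroup, yielding $\tilde S=(S^n\times S^n)/(\Z/2)_{\mathrm{diag}}$, is a genuine refinement: it pins down the cover concretely rather than only asserting its existence, and it supplies the uniqueness argument that the paper merely gestures at through the Calabi--Markus reference. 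Both routes rest on the same underlying idea---the orientation double cover of a non-orientable manifold---but yours makes the mechanism explicit and handles the product correctly.
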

\begin{proof}
This is clear from elementary topology that we have $\R\cP^n\cong  S^n/\sim$ where $\sim$ is the antipodal map. So, we have the fiber bundle  
$S^n\to \R\cP^n$, with group $\Z_2$ of isometries $\pm I$. Another way of considering this to apply Section 3 of \cite{CaMa}  and in particular the ingredients constituting the proof of Theorem 3 in \cite{CaMa}.
\end{proof}

\begin{proposition}\label{P:Pierce}
The space of probability distributions $S$  is a non-orientable Lorentzian manifold, decomposed into pseudo-Riemannian submanifolds, being symmetric to each other with respect to the Pierce mirror. This Pierce mirror being an Hermitian hyperquadric is an ellipsoid and it isomorphic to $\mathbb{R}^n\times S^{n-1}$. This is the mirror symmetry of the fourth Frobenius manifold. 

\end{proposition}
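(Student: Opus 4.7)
The plan is to assemble the statement from three ingredients already established in the excerpt: Proposition \ref{P:Lo} (non-orientability and Lorentzian structure), Theorem \ref{T:mirr} (realization of $S$ as one of the two domains cut out by the absolute hyperquadric), and the topological description of Hermitian ellipsoids given in the paragraph preceding Theorem \ref{T:mirr}. The first clause of the statement is literally Proposition \ref{P:Lo}, so I would invoke it directly. What needs work is the decomposition of $S$ into two mutually symmetric pseudo-Riemannian pieces, the identification of the Pierce mirror with a concrete Hermitian hyperquadric, and the topological identification of that mirror with $\R^n\times S^{n-1}$.

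First, I would unpack Theorem \ref{T:mirr}: the manifold $S$ is realized as one of the two open domains into which the absolute hyperquadric $xQx=0$ divides $\fC\cP^n$. The paracomplex conjugation $z=x+\e y \mapsto \bar z=x-\e y$ swaps the canonical idempotents $e_+$ and $e_-$, so it descends to an anti-collineation of $\fC\cP^n$. Because $\overline{xQx}=\bar x Q\bar x$ agrees with $xQx$ on the absolute, this anti-collineation fixes the hyperquadric pointwise and exchanges the two half-spaces it cuts out. Since the Hermitian form satisfies $\{x,y\}=\overline{\{y,x\}}$, the distance $\cos^2(\omega/r)=\{x,y\}\{y,x\}/(\{x,x\}\{y,y\})$ is preserved by this anti-collineation, so the two halves are isometric pseudo-Riemannian submanifolds of $\fC\cP^n$, mirror-symmetric about the hyperquadric. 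The name ``Pierce mirror'' is then legitimate: the hyperquadric is exactly the fixed locus of the involution associated to the Peirce decomposition $\fC=\R e_+\oplus \R e_-$ of the paracomplex algebra.

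Second, to identify the topology of the mirror, I would apply the statement recalled just before Theorem \ref{T:mirr}: central Hermitian hyperquadrics in $\fC E^n$ are Hermitian ellipsoids, and these are homeomorphic to $\R^n\times S^{n-1}$. Specialising to the absolute hyperquadric $xQx=0$ gives exactly the topology claimed. Combining this with the previous paragraph and with Proposition \ref{P:Lo} yields all three assertions of the proposition.

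The main obstacle, and the step I would want to carry out with care, is the verification that the anti-collineation induced by paracomplex conjugation is genuinely a pseudo-Riemannian isometry of each half, not merely a diffeomorphism; only then does ``symmetric with respect to the mirror'' carry the intended metric meaning. This amounts to a short calculation with the Hermitian form, but it is the pivot on which the mirror-symmetry interpretation (and hence the last sentence of the proposition relating this picture to the mirror symmetry of the fourth Frobenius manifold) depends, so I would spell it out rather than leave it to the reader.
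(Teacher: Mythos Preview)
Your proposal is correct and follows essentially the same route as the paper's proof. Both arguments (i) invoke the already-established non-orientable Lorentzian structure, (ii) use Theorem~\ref{T:mirr} to exhibit the absolute Hermitian hyperquadric as the mirror, and (iii) read off the ellipsoid topology $\R^n\times S^{n-1}$ from the paragraph preceding Theorem~\ref{T:mirr}.

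There are two minor organisational differences worth noting. First, for the decomposition into a pair of pseudo-Riemannian submanifolds the paper cites Corollary~\ref{C:sbmfd} (the totally geodesic submanifolds coming from the Wolf-type classification), whereas you extract the two pieces directly from Theorem~\ref{T:mirr} as the two domains cut out by the absolute. Second, the paper phrases the involution abstractly---the Peirce reflection attached to the idempotent pair of $\fC$, transported to $S^{2n}_1$---and then appeals to section~1.3 and Theorem~4.3 of \cite{Roz97} to conclude that the distances to the two domains agree; you instead name the involution concretely as the anti-collineation induced by paracomplex conjugation $z\mapsto\bar z$ and check directly that it preserves the Hermitian distance. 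Your version is more self-contained; the paper's buys a cleaner link to the Jordan-algebraic Peirce language. One caution: your phrase ``fixes the hyperquadric pointwise'' is stronger than what you actually need or verify---conjugation preserves the hyperquadric as a set and exchanges the two halves, and it is the latter that makes the hyperquadric the mirror. The paper states this as ``the set of fixed points under this involution coincides with a hyperquadric,'' which is the same assertion in slightly different clothing; in either formulation it is worth being precise about which involution is meant.
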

\begin{proof}
The space of probability distributions $S$ is by Theorem \ref{T:nono} a non-orientable Lorentzian manifold. %By duality $\cH$ is also a non-orientable Lorentz manifold. 
Now, $S$ has a pair of pseudo-Riemannian submanifolds (by Collary \ref{C:sbmfd}).

Using Corollary \ref{C:proj} we have that $S$ is identified to a paracomplex projective space. Now, since the spin factor algebra has a pair of idempotents, this implies that  there exists a Pierce mirror, inducing symmetries of the space. 
We define the following morphism from the algebra of paracomplex numbers to $S^{2n}_1$. The Pierce mirror in the algebra corresponds to an involution in $S^{2n}_1$. The set of fixed points under this involution coincides with a hyperquadric. 

More precisely, applying the Theorem\, \ref{T:mirr} (paragraph 4.2), it turns out that this hyperquadric is an Hermitian ellipsoid hyperquadric, and that this is the set of fixed points under the symmetry. Therefore, it is a mirror. Consider the distance from hyperquadric to both domains (totally geodesic submanifolds of the fourth Frobenius manifold). From the properties of the module over a paracomplex algebra and its real interpretation (see section 1.3), we see that the distances from this hyperplane to both domains are geometrically the same (see Theorem \ref{T:mirr} and Theorem 4.3 in \cite{Roz97}). Now, since both domains are isomorphic to each other, the hyperquadric is a reflection mirror.  
This shows the statement.
\end{proof}

\section{Conclusion}
We have shown previously that the structure of the cone $\cC$ is a Lorenztian structure. Naturally, this leads to raising questions around {\it causality}, where the causality is interpreted here in the sense of S. W. Hawking  $\&$ J. F. R. Ellis  \cite{HaEl}. 

\smallskip 

Let us remark that  for a space constructed from modules over an algebra, the principle of causality is no longer a hypothesis. 
Such a space can be seen as a ``space-time'' {\it only if} we presuppose some {\it causality principle}. Indeed, in this way one can define a notion of time or {\it times}. 

\smallskip 

On the other side, we have a cone $\cC$ of measures of bounded variations. This is related to objects being central in machine learning and statistics. 
Machine learners and statisticians have translated the philosophical idea of {\it causality} into a viable inferential tool \cite{Sha96}. The main question researchers posed over the past thirty years was the extent to which a change in a causal variable might influence changes in a collection of effect variables, on the basis of observing an uncontrolled idle system. Traditionally, causal inference methods rely on a prespecified set of problem variables and use tools from counterfactual analysis, structural equation models and graphical models. 

\smallskip 

To conclude, this leads to a defining a bridge, between causality as defined by S. W. Hawking  $\&$ J. F. R. Ellis in \cite{HaEl} and causality defined for probability and statistics. Furthermore, this raises many questions and developments around these very active areas of research.

%%%%%%%%%%%%%%%%%%
%\appendix 

\end{document}